\numberwithin{equation}{section}
\theoremstyle{plain}
\newtheorem{theorem}{Theorem}[section]
\newtheorem{lemma}[theorem]{Lemma}
\newtheorem{corollary}[theorem]{Corollary}
\newtheorem{proposition}[theorem]{Proposition}
\theoremstyle{definition}
\newtheorem{example}[theorem]{Example}
\theoremstyle{remark}
\newtheorem{remark}[theorem]{Remark}
\newcommand{\RMOD}{\ensuremath{{R\text{-}\!\mathcal{M}od}}} 
\newcommand{\Image}{\operatorname{Im}} 
\newcommand{\Ker}{\operatorname{Ker}}
\newcommand{\Rad}{\operatorname{Rad}}
\newcommand{\ShortExactSequence}[5]{\ensuremath{\xymatrix@1{ 0 \ar[r] &  #1
\ar[r]^-{#2} & #3 \ar[r]^-{#4} &  #5
\ar[r] & 0 }}}
\newcommand{\End}{\operatorname{End}}
 \newcommand{\Ext}{\operatorname{Ext}} 
 \newcommand{\To}{\longrightarrow}   
\begin{document}
\pagestyle{myheadings}
\enlargethispage{\baselineskip}
\title{\vspace{-2cm} Rad-supplementing modules}

\author{Salahattin \"{O}zdemir\footnote{
            Department of Mathematics, Faculty of Sciences, Dokuz Eyl\"{u}l
University, Turkey.$\quad$
            e-mail: \texttt{salahattin.ozdemir@deu.edu.tr}.
              \newline 2010 \emph{AMS Mathematics Subject Classification:} 16D10, 16L30.}
}

\maketitle
\renewcommand{\theenumi}{\roman{enumi}}
\renewcommand{\labelenumi}{\emph{(\theenumi)}}

\begin{abstract}
Let $R$ be a ring and  $M$ be a left $R$-module. If $M$ is
Rad-supplementing, then every direct summand of $M$ is
Rad-supplementing, but not each factor module of $M$. Any finite direct sum of Rad-supplementing modules
is Rad-supplementing. Every module with composition series is (Rad-)supplementing. $M$ has a Rad-supplement in its injective envelope if and only if $M$ has a Rad-supplement in every essential extension. $R$ is left perfect if and only if $R$ is semilocal, reduced and the free left $R$-module
$(_R R)^{(\mathbb{N})}$ is Rad-supplementing if and only if $R$ is reduced and the free left $R$-module
$(_R R)^{(\mathbb{N})}$ is ample Rad-supplementing. $M$ is ample Rad-supplementing if and only if every submodule of $M$ is Rad-supplementing. Every left $R$-module is (ample) Rad-supplementing if and only if $R/P(R)$ is left perfect, where $P(R)$ is the sum of all left ideals $I$ of $R$ such that $\Rad I = I$.
\end{abstract}

\textbf{$\quad$Key words:} supplement, Rad-supplement, supplementing module,

$\quad$ Rad-supplementing module, perfect ring.

\section{Introduction}\label{sec:introduction}

All rings consider in this paper will be associative with an identity
element. Unless otherwise stated, $R$ denotes an arbitrary ring and all modules will be \emph{left}
unitary $R$-modules. By $\RMOD$ we denote the category of left $R$-modules. Let $M$ be a module. By $X\subseteq M$, we mean $X$ is a submodule of $M$ or $M$ is an extension of $X$. As usual, $\Rad M$ denotes the radical of $M$ and $J$ denotes the jacobson radical of the ring $R$. $E(M)$ will be the injective envelope of  $M$.  For an index set $I$, $M^{(I)}$  denotes as usual the direct sum $\oplus_{I} M$. By $\mathbb{N}$, $\mathbb{Z}$ and $\mathbb{Q}$ we denote as usual  the set of natural numbers, the ring of integers and the field of rational numbers, respectively.
  A submodule $K \subseteq M$ is called
\emph{small} in $M$ (denoted by $K\ll M$) if $M\neq K+T$ for every
proper submodule $T$ of $M$. Dually, a submodule $L\subseteq M$ is called \emph{essential} in $M$ (denoted by $L\trianglelefteq M$) if $L\cap X \neq 0$ for every nonzero submodule $X$ of $M$.

The notion of a supplement submodule was introduced in \cite{Kasch-Mares} in order to characterize semiperfect modules, that is projective modules whose factor modules have projective cover. For submodules $U$ and $V$ of a module $M$, $V$ is said to be a \emph{supplement} of $U$ in $M$ or $U$ is said to \emph{have a supplement} $V$ in $M$ if $U+V = M$ and $U\cap V \ll V$. $M$ is called a \emph{supplemented} module if every submodule of $M$ has a supplement in $M$. See
\cite[\S 41]{Wisbauer:FoundationsofModuleandRingTheory} and \cite{LiftingModules:Wisbauer-et.al} for results (and the definitions) related to supplements and supplemented modules. Recently, several authors have studied different generalizations of supplemented modules. In \cite{Wisbauer-et.al:t-complementedandt-supplementedmodules}, $\tau$-supplemented modules were defined for an arbitrary preradical $\tau$ for $\RMOD$. For submodules $U$ and $V$ of a module $M$, $V$ is said to be a $\tau$-\emph{supplement} of $U$ in $M$ or $U$ is said to \emph{have a} $\tau$-\emph{supplement} $V$ in $M$ if $U+V = M$ and $U\cap V \subseteq \tau(V)$. $M$ is called a $\tau$-\emph{supplemented} module if every submodule of $M$ has a $\tau$-supplement in $M$.
For the particular case $\tau = \Rad$, Rad-supplemented modules have been studied in \cite{Rad-supplementedModules}; rings over which all modules are Rad-supplemented were characterized. See \cite{WangY:GeneralizedSupplementedModules}; these modules are called \emph{generalized supplemented} modules. Note that Rad-supplements $V$ of a module $M$ are also called \emph{coneat} submodules, and can be characterized by the fact that each module with zero radical is injective with respect to the inclusion $V\subseteq M$; see \cite[\S 10]{LiftingModules:Wisbauer-et.al}, \cite{Wisbauer-et.al:t-complementedandt-supplementedmodules} and \cite{Mermut:Ph.D.tezi}. On the other hand, modules that have supplements in every module in which it is contained as a submodule have been studied in \cite{Zoschinger:ModulnDieInJederErweiterungEinKomplementHaben}; the structure of these modules, which are called \emph{modules with the property (E)}, has been completely determined over Dedekind domains. Such modules are also called  \emph{ Moduln mit Erg\"{a}nzungseigenschaft} in \cite{Averdunk:Ph.D.thesis} and \emph{supplementing} modules in \cite[p.255]{LiftingModules:Wisbauer-et.al}. Also, in the recent paper \cite{Turkmen-Calisici:modules-that-have-supplements-in-every-cofinite-extansion},  modules that have a supplement in every cofinite extension have been studied, where a module $N$ is called a \emph{cofinite extension} of $M$ if $M\subseteq N$ and $N/M$ is finitely generated; see \cite{AlizadeAndBilhanAndSmith:ModulesWhoseMaximalSubmodulesHaveSupplements} for the notion of a cofinite submodule.  We follow the terminology and notation as in \cite{LiftingModules:Wisbauer-et.al}.  We call a module $M$ \emph{supplementing} if it has a supplement in each module in which it is contained as a submodule. By considering these modules we define and study (ample) Rad-supplementing modules as a proper generalization of supplementing modules. A module $M$ is called \emph{(ample) Rad-supplementing} if it has a (an ample) Rad-supplement in each module in which it is contained as a submodule, where a submodule $U \subseteq M$ has \emph{ample Rad-supplements} in
$M$ if for every $L\subseteq M$ with $U + L = M$, there is a
Rad-supplement $L'$ of $U$ with $L' \subseteq L$.

In section \ref{sec:Rad-supplementing modules} we investigate some properties of Rad-supplementing modules. It is clear that every supplementing module is Rad-supplementing, but the converse implication fails to be true, Example \ref{exm}. If a module $M$ has a Rad-supplement in its injective envelope, $M$ need not be Rad-supplementing. However, we prove that $M$ has a Rad-supplement in its injective envelope if and only if $M$ has a Rad-supplement in every essential extension, Proposition \ref{prop:M has Rad-supp.in N iff it has in E(M) iff it has in E}. Using the fact that, for modules $A\subseteq B$, if $A$ and $B/A$ are (Rad-)supplementing then so is $B$, we prove that every module with composition series is (Rad-)supplementing, Theorem \ref{thm:every-module-with-composition-series-is-rad-supp.}. A factor module of a Rad-supplementing module need not be Rad-supplementing, Example \ref{exm2}. For modules $A\subseteq B \subseteq C$ with $C/A$ injective, we prove that if $B$ is Rad-supplementing then so is $B/A$. As one of the main results, we prove that $R$ is left perfect if and only if $R$ is semilocal, $_R R$ is reduced and $(_R R)^{(\mathbb{N})}$ is Rad-supplementing. Finally, using a result of \cite{Zoschinger:ModulnDieInJederErweiterungEinKomplementHaben}, we show that over a commutative noetherian ring $R$, a semisimple $R$-module $M$ is Rad-supplementing if and only if it is supplementing and that is equivalent the fact that $M$ is pure-injective, Theorem \ref{thm:supplementing=rad-supplementing=pure-injective}.

Section \ref{sec:Ample-rad-supp} contains some properties of ample Rad-supplementing modules.  It starts by proving a useful property that a module $M$ is ample Rad-supplementing if and only if every submodule of $M$ is Rad-supplementing, Proposition \ref{prop:M-ample-rad-supplementing-iff-every-submodule-rad-supplementing}.  One of the main results of this part is that $R$ is left perfect if and only if $_R R$ is reduced and the free left $R$-module $(_R R)^{(\mathbb{N})}$ is ample Rad-supplementing, Theorem \ref{thm:R-perfect-iff-R-reduced-and-F-ample-rad-supp}. In the proof of this result, Rad-supplemented modules plays an important role as, of course,  every ample Rad-supplementing module is Rad-supplemented. Finally, using the characterization of Rad-supplemented modules given in \cite{Rad-supplementedModules}, we characterize the rings over which every module is (ample) Rad-supplementing. We prove that every left $R$-module is (ample) Rad-supplementing if and only if every reduced left $R$-module is Rad-supplementing if and only if $R/P(R)$ is left perfect, Theorem \ref{thm:characterizaiton-of-rad-supp}.

\section{Rad-supplementing modules}\label{sec:Rad-supplementing modules}

A module $M$ is called  \emph{radical} if $\Rad M = M$, and $M$ is called \emph{reduced} if it has no nonzero radical submodule. See \cite[p.47]{Zoschinger:KomplementierteModulnUberDedekindringen} for details for the notion of reduced and radical modules.

\begin{proposition}\label{prop:radical-modules-are-rad-supp.}
Supplementing modules and radical modules are Rad-supplementing.
\end{proposition}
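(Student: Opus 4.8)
The statement has two parts: supplementing modules are Rad-supplementing, and radical modules are Rad-supplementing. The first is immediate from the definitions: if $M$ is supplementing and $M \subseteq N$, then $M$ has a supplement $V$ in $N$, so $M + V = N$ and $M \cap V \ll V$; since $M \cap V \ll V$ implies $M \cap V \subseteq \Rad V$ (every small submodule lies in the radical), $V$ is a Rad-supplement of $M$ in $N$. Hence $M$ is Rad-supplementing. No obstacle here.

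For the second part, let $M$ be a radical module, i.e. $\Rad M = M$, and suppose $M \subseteq N$. The plan is to show that $N$ itself serves as a Rad-supplement of $M$ in $N$, or more precisely to exhibit an explicit Rad-supplement. We have $M + N = N$ trivially, and $M \cap N = M = \Rad M$. So it suffices to observe that $\Rad M \subseteq \Rad N$; this holds because $M \subseteq N$ and the radical is monotone on submodules in the sense that $\Rad M \subseteq \Rad N$ whenever $M$ is a submodule of $N$ (indeed $\Rad M$ is the sum of all small submodules of $M$, and a submodule small in $M$ need not be small in $N$, so one instead uses that $\Rad M = \bigcap\{\text{maximal submodules of } M\}$ and any maximal submodule of $N$ either contains $M$ or meets $M$ in a maximal submodule of $M$). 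Thus $M \cap N = M = \Rad M \subseteq \Rad N$, so $N$ is a Rad-supplement of $M$ in itself, and $M$ is Rad-supplementing.

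The only point requiring a little care is the inclusion $\Rad M \subseteq \Rad N$ for a submodule $M \subseteq N$; this is standard (see \cite[\S 21]{Wisbauer:FoundationsofModuleandRingTheory} or \cite{LiftingModules:Wisbauer-et.al}), but I would state it explicitly since it is the crux of the radical-module case. I expect no genuine obstacle: both halves reduce to well-known elementary facts about the radical, namely that small submodules lie in the radical and that the radical of a submodule is contained in the radical of the ambient module.
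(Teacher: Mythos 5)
Your proposal is correct and follows the same route as the paper's proof: a supplement is automatically a Rad-supplement since small submodules lie in the radical, and for $\Rad M = M$ the whole module $N$ is a Rad-supplement because $M \cap N = M = \Rad M \subseteq \Rad N$. The paper states this more tersely; your explicit justification of $\Rad M \subseteq \Rad N$ (a standard property of the radical as a preradical) is exactly the detail being suppressed.
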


\begin{proof}
Let $M$ be a module and $N$ be any extension of $M$. If $M$ is supplementing, then it has a supplement, and so a Rad-supplement in $N$. Thus $M$ is Rad-supplementing. Now, If $\Rad M = M$, then $N$ is a Rad-supplement of $M$ in $N$.
\end{proof}

 By $P(M)$ we denote the sum of all \emph{radical}
submodules of the module $M$, that is, $$P(M)=\sum \{ U\subseteq M \mid \,
\Rad U=U\} .$$ Clearly $M$ is reduced if $P(M)=0$.

Since $P(M)$ is a radical submodule of $M$ we have the following corollary.

\begin{corollary}\label{cor:P(M)-Rad-supplementing}
For a module $M$, $P(M)$ is Rad-supplementing.
\end{corollary}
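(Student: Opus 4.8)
The plan is to deduce the corollary directly from Proposition~\ref{prop:radical-modules-are-rad-supp.} by verifying that $P(M)$ is a radical module, i.e.\ that $\Rad P(M) = P(M)$. The inclusion $\Rad P(M) \subseteq P(M)$ is trivial, so the content is the reverse inclusion. For this I would use that the radical is a subfunctor of the identity: for any submodule $U \subseteq P(M)$ the inclusion map yields $\Rad U \subseteq \Rad P(M)$. Hence, if $U \subseteq M$ is any submodule with $\Rad U = U$, then $U \subseteq P(M)$ and therefore $U = \Rad U \subseteq \Rad P(M)$. Summing over all such $U$ gives $P(M) = \sum\{U \subseteq M \mid \Rad U = U\} \subseteq \Rad P(M)$, and equality follows.

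Once $\Rad P(M) = P(M)$ is known, $P(M)$ is a radical module, and Proposition~\ref{prop:radical-modules-are-rad-supp.} applies: given any module $N$ containing $P(M)$ as a submodule, $N$ itself is a Rad-supplement of $P(M)$ in $N$, because $P(M) + N = N$ and $P(M) \cap N = P(M) = \Rad P(M) \subseteq \Rad N$. Thus $P(M)$ has a Rad-supplement in every extension, i.e.\ $P(M)$ is Rad-supplementing.

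The only step needing any care is the inequality $\Rad U \subseteq \Rad P(M)$ for $U \subseteq P(M)$; this rests on the standard fact that a small submodule of a submodule remains small in the ambient module (equivalently, that $f(\Rad A) \subseteq \Rad B$ for every homomorphism $f \colon A \to B$), so in practice there is no real obstacle, and the corollary is essentially a one-line consequence of the proposition together with the definition of $P(M)$ already recorded in the excerpt.
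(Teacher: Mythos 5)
Your proposal is correct and follows exactly the paper's route: the paper simply notes that $P(M)$ is a radical submodule of $M$ and invokes Proposition \ref{prop:radical-modules-are-rad-supp.}, which is precisely what you do, with the added (standard and correct) verification that $\Rad P(M) = P(M)$ via the functoriality of the radical.
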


Recall that a subset $I$ of a ring $R$ is said to be \emph{left} $T$\emph{-nilpotent} in case, for every sequence $\{a_k\}_{k=1}^{\infty}$ in $I$, there is a positive integer $n$ such that $a_1 \cdots a_n =0$.

In general, Rad-supplementing modules need not be supplementing as the following example shows.
\begin{example}\label{exm}
Let $k$ be a field.
 In the polynomial ring
$ k[x_{1},x_{2},\ldots]$ with countably many indeterminates $x_n$, $n\in \mathbb{N}$, consider the ideal $I=(x_{1}^2, x_{2}^2 -x_{1},x_{3}^2 -x_{2},\ldots)$  generated by
 $x_{1}^2$ and $x_{n+1}^2 -x_{n}$ for each $n\in \mathbb{N}$. Then the quotient ring $R=k[x_{1},x_{2},\ldots]/I$ is a local ring with  the unique maximal ideal $J=J^2$ (see \cite[Example 6.2]{Rad-supplementedModules} for details). Now let $M = J^{(\mathbb{N})}$. Then we have $\Rad M = M$, and so $M$ is Rad-supplementing by Proposition \ref{prop:radical-modules-are-rad-supp.}. However, $M$ does not have a supplement in $R^{(\mathbb{N})}$. Because, otherwise,  by \cite[Theorem 1]{Engin-Lomp:Rings-Whose-modules-are-Weakly-supplemented-are-perfect}, $J$ would be a left $T$-nilpotent as $R$ is semilocal,  but this is impossible. Thus $M$ is not supplementing.
\end{example}

For instance, over a left max ring, supplementing modules and Rad-supplementing modules coincide, where $R$ is called a \emph{left max ring} if every left $R$-module has a maximal submodule or equivalently, $\Rad M \ll M$ for every left $R$-module $M$.

\begin{proposition}
Every direct summand of a Rad-supplementing module is
Rad-supplementing.
\end{proposition}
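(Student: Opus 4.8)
The plan is to verify the defining property of a Rad\nobreakdash-supplementing module directly. Write $M = A \oplus B$ with $A$ the given direct summand, and let $N$ be an arbitrary module containing $A$; I must produce a Rad\nobreakdash-supplement of $A$ in $N$. The idea is to transport this question about $A \subseteq N$ into a question about $M$ inside a larger module where the hypothesis on $M$ can be used, and then push the resulting Rad\nobreakdash-supplement back down along a projection. The natural choice of ``larger module'' is $N \oplus B$, which contains $M = A \oplus B$ as a submodule precisely because $A \subseteq N$.

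So I would first apply the hypothesis in $N \oplus B$: since $M$ is Rad\nobreakdash-supplementing, it has a Rad\nobreakdash-supplement $V$ in $N \oplus B$, i.e. $M + V = N \oplus B$ and $M \cap V \subseteq \Rad V$. Let $\pi \colon N \oplus B \To N$ be the projection with $\Ker \pi = B$, and I claim $\pi(V)$ is a Rad\nobreakdash-supplement of $A$ in $N$. For the sum condition, apply $\pi$ to $M + V = N \oplus B$; as $\pi(M) = \pi(A \oplus B) = A$ and $\pi$ is onto $N$, this gives $A + \pi(V) = N$. For the radical condition, take $a \in A \cap \pi(V)$ and write $a = \pi(v)$ with $v \in V$; then $v - a \in \Ker \pi = B$, so $v \in A + B = M$, whence $v \in M \cap V \subseteq \Rad V$. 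Using the general fact that $f(\Rad X) \subseteq \Rad(f(X))$ for any homomorphism $f$ (applied to $\pi|_V$), we get $a = \pi(v) \in \pi(\Rad V) \subseteq \Rad(\pi(V))$. Hence $A \cap \pi(V) \subseteq \Rad(\pi(V))$, so $\pi(V)$ is the desired Rad\nobreakdash-supplement, and since $N$ was arbitrary, $A$ is Rad\nobreakdash-supplementing.

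The construction is essentially forced once one thinks of embedding $M$ into $N \oplus B$, so there is no serious obstacle; the only point needing a little care — and the one I would flag as the heart of the argument — is checking that the projected submodule $\pi(V)$ inherits both defining properties, which reduces exactly to the two observations $\pi(M) = A$ and $\pi(\Rad V) \subseteq \Rad(\pi(V))$. I note that the identical computation, with $\ll$ in place of $\subseteq \Rad$, reproves that a direct summand of a supplementing module is supplementing.
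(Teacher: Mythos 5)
Your proof is correct and follows essentially the same route as the paper's: embed $M = A\oplus B$ into $N\oplus B$, take a Rad-supplement $V$ of $M$ there, and project onto $N$ to obtain the Rad-supplement $\pi(V)$ of $A$. Your verification that $A\cap\pi(V)\subseteq\pi(M\cap V)$ via the element $v-a\in\Ker\pi$ is in fact slightly more careful than the paper's bare assertion of the corresponding equality.
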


\begin{proof}
Let $M$ be a Rad-supplementing module, $U$ be a direct summand of $M$ and let $N$ be any extension of $U$. Then
$M=A\oplus U$ for some submodule $A\subseteq M$. By hypothesis
$M$ has a Rad-supplement in the module $A \oplus N$ containing $M$,
that is, there exists a submodule $V$ of $A\oplus N$ such that
$$(A \oplus U) + V = A \oplus N \quad\text{ and }\quad (A \oplus U)
\cap V \subseteq \Rad V .$$ Let $g:A\oplus N \rightarrow N$ be the
projection onto $N$.
Then $$U + g(V) = g(A \oplus U) + g(V) = g((A\oplus U) + V) =
g(A\oplus N) = N \text{ and },$$
$$U \cap g(V) = g((A\oplus U)\cap V)\subseteq g(\Rad
V)\subseteq \Rad (g(V)).$$
Hence $g(V)$ is a Rad-supplement of $U$ in $N$.
\end{proof}

If a module $M$ has a Rad-supplement in its injective envelope $E(M)$, $M$ need not be Rad-supplementing. For example, for $R = \mathbb{Z}$, the $R$-module $M=2\mathbb{Z}$ has a Rad-supplement in $E(M) = \mathbb{Q}$ since $\Rad \mathbb{Q} = \mathbb{Q}$ (and so $\mathbb{Q}$ is Rad-supplemented). But, $M$ does not have a Rad-supplement in $\mathbb{Z}$, and thus $M$ is not Rad-supplementing. However, we have the following result.

\begin{proposition}\label{prop:M has Rad-supp.in N iff it has in E(M) iff it has in E}
Let $M$ be a module. Then the following are equivalent.
\begin{enumerate}
 \item[(i)] $M$ has a Rad-supplement in every essential extension,
 \item[(ii)] $M$ has a Rad-supplement in its injective envelope $E(M)$.
\end{enumerate}
\end{proposition}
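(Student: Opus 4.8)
The plan is to dispatch (i) $\Rightarrow$ (ii) trivially and put all the effort into (ii) $\Rightarrow$ (i), since $E(M)$ is itself an essential extension of $M$ and hence (i), applied to $N=E(M)$, gives (ii) at once.

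For (ii) $\Rightarrow$ (i): let $N$ be an essential extension of $M$. First I would use injectivity of $E(M)$ to extend the inclusion $M\hookrightarrow E(M)$ to a homomorphism $f\colon N\to E(M)$; since $f$ is injective on $M$ and $M\trianglelefteq N$, we have $\Ker f=0$, so after replacing $N$ by $f(N)$ I may assume $M\subseteq N\subseteq E:=E(M)$. Then $M\trianglelefteq E$ forces $N\trianglelefteq E$, so $E=E(N)$ as well and, for every submodule $V$ of $E$, $V\cap N\trianglelefteq V$. Next, using (ii), fix a Rad-supplement $V$ of $M$ in $E$, so $M+V=E$ and $M\cap V\subseteq\Rad V$. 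The obvious guess for a Rad-supplement of $M$ in $N$ is $W:=V\cap N$. The ``sum'' half is immediate from the modular law (recall $M\subseteq N$):
\[
M+W=M+(V\cap N)=(M+V)\cap N=E\cap N=N .
\]
Likewise $M\cap W=M\cap(V\cap N)=M\cap V\subseteq\Rad V$, so the whole proposition reduces to the single inclusion $M\cap V\subseteq\Rad(V\cap N)$.

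The hard part will be exactly this last inclusion — pushing the containment down from $\Rad V$ to $\Rad(V\cap N)$ — and I do not see a way to finesse it away. One cannot, for instance, arrange $M\cap V=0$: since $M$ is essential in $E$, any $V$ with $M\cap V=0$ is zero, so a Rad-supplement of $M$ in $E$ must genuinely overlap $M$ and the radical condition has to be met head on. The one extra ingredient in hand is essentiality, $V\cap N\trianglelefteq V$, so the attempt would be to show that an element of $\Rad V$ lying in the essential submodule $V\cap N$ already lies in $\Rad(V\cap N)$: take an arbitrary maximal submodule $T$ of $V\cap N$, use essentiality of $V\cap N$ in $V$ to relate $T$ to the maximal submodules of $V$ (all of which contain $M\cap V$), and conclude $M\cap V\subseteq T$. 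If essentiality alone does not yield this cleanly, the fallback would be to choose the Rad-supplement $V$ of $M$ in $E$ as economically as possible so that $M\cap V$ is forced into $\Rad(V\cap N)$. Carrying out this comparison is the crux; granting it, $W=V\cap N$ is the desired Rad-supplement of $M$ in $N$ and (i) follows.
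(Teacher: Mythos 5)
Your reduction is the same as the paper's: embed the essential extension $N$ into $E(M)$ via a map $h$ that is monic because $M\trianglelefteq N$, take a Rad-supplement $V$ of $M$ in $E(M)$, and test $W=V\cap N$ (the paper's $h^{-1}(K)$); the modular-law computation $M+W=N$ and the identity $M\cap W=M\cap V$ are exactly as in the paper. The step you isolate as the crux, namely $M\cap V\subseteq \Rad(V\cap N)$, is also exactly where the paper's proof lands: it writes $M\cap h^{-1}(K)=h^{-1}(M\cap K)\subseteq h^{-1}(\Rad K)\subseteq\Rad\bigl(h^{-1}(K)\bigr)$ and offers no justification for the last containment. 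So you have not taken a different route; you have simply been honest about the one step the paper asserts without proof.

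That step is a genuine gap, and it cannot be closed: for a monomorphism $h$ the valid general containment is $\Rad\bigl(h^{-1}(K)\bigr)\subseteq h^{-1}(\Rad K)$, the reverse of what is needed (preimages, unlike images, do not push the radical forward). Worse, the proposition itself fails, and the paper's own example in the paragraph immediately preceding it is a counterexample: take $R=\mathbb{Z}$ and $M=2\mathbb{Z}$. Then $V=\mathbb{Q}$ is a Rad-supplement of $M$ in $E(M)=\mathbb{Q}$ because $\Rad\mathbb{Q}=\mathbb{Q}$, so (ii) holds; but $N=\mathbb{Z}$ is an essential extension of $2\mathbb{Z}$ in which $M$ has no Rad-supplement (any candidate is $n\mathbb{Z}$ with $\Rad(n\mathbb{Z})=0$ while $2\mathbb{Z}\cap n\mathbb{Z}\neq 0$ whenever $2\mathbb{Z}+n\mathbb{Z}=\mathbb{Z}$), so (i) fails. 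In this example $V\cap N=\mathbb{Z}$ has $\Rad(V\cap N)=0$ while $M\cap V=2\mathbb{Z}$, which is precisely the failure of the crux inclusion. Your fallback of choosing $V$ ``economically'' cannot rescue this, since the conclusion (i) is simply false here. In short: your proof attempt is incomplete exactly where the statement is unprovable, and the paper's argument is erroneous at that same point.
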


\begin{proof}
$(i) \Rightarrow (ii)$ is clear.
$(ii) \Rightarrow (i)$ Let $M \subseteq N$ with $M\trianglelefteq N$, and let $f: M\to N$ and
$g: M\to E(M)$ be inclusion maps. Then we have the following
commutative diagram with $h$ necessarily monic:
$$
\xymatrix{ M \ar@{^{(}->}[r]^f  \ar@{^{(}->}[d]_g & N \ar@{-->}[dl]^h \\
 E(M)  &   }. $$
 By hypothesis $M$ has a Rad-supplement in $E(M)$, say $K$, that is, $M + K = E(M)$ and $M\cap K
  \subseteq \Rad K$.
 Since  $M \subseteq h(N)$, we obtain that $h(N) = h(N) \cap E(M) = h(N) \cap (M+K)= M + h(N)\cap K$.
 Now, taking any $n\in N$,  we have $h(n)= m + h(n_1) = h(m + n_1)$ where $m\in M$ and $h(n_1) \in h(N)\cap K$. So, $n= m + n_1 \in M + h^{-1}(K)$ since $h$ is monic, and thus $M + h^{-1}(K)= N$.
  Moreover, $M\cap h^{-1}(K)= h^{-1}(M\cap K)\subseteq h^{-1}(\Rad K)\subseteq \Rad (h^{-1}(K))$ since  $h^{-1}(M) =  M$ as $h$ is monic. Hence
     $h^{-1}(K)$ is a Rad-supplement of $M$ in $N$.
\end{proof}

\begin{proposition}\label{prop:if$A$and$B/A$are-Rad-supplementing-modulesThen-so-is-$B$}
Let $B$ be a module and $A$ be a submodule of $B$. If $A$ and $B/A$ are Rad-supplementing, then so is $B$.
\end{proposition}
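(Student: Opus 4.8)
Let $C$ be any extension of $B$; I need to produce a Rad-supplement of $B$ in $C$. The strategy is the standard two-step ``lifting'' argument for extension-closure of such properties, carried out in the category of $R$-modules rather than inside a fixed ambient module. First I would form a suitable extension of the quotient $C/A$. Since $A \subseteq B \subseteq C$, we have $B/A \subseteq C/A$, so $C/A$ is an extension of $B/A$; because $B/A$ is Rad-supplementing, there is a submodule $\overline{V} = V/A$ of $C/A$ (with $A \subseteq V \subseteq C$) such that
\[
(B/A) + (V/A) = C/A \quad\text{and}\quad (B/A)\cap(V/A) \subseteq \Rad(V/A).
\]
The first condition gives $B + V = C$. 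The second, after pulling back along $C \to C/A$, gives $(B\cap V) + A = B \cap (V + A) $... more carefully, $\overline{B\cap V} \subseteq \overline{B}\cap\overline{V} \subseteq \Rad(V/A)$, so $(B\cap V) + A \subseteq$ the preimage of $\Rad(V/A)$ in $V$; and since $A \subseteq V$ with $\Rad(V/A) = (\Rad V + A)/A$ when... here I must be careful, as in general $\Rad(V/A)$ need not equal $(\Rad V + A)/A$. The clean move is: the preimage in $V$ of $\Rad(V/A)$ is a submodule $W$ with $A \subseteq W \subseteq V$ and $W/A = \Rad(V/A)$, hence $\Rad(W/A) = W/A$, i.e. $W/A$ is a radical (hence Rad-supplementing) module, and $B\cap V \subseteq W$.

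Next I would use that $A$ is Rad-supplementing inside $V$ (or inside $W$): since $A$ is Rad-supplementing and $A \subseteq V$, there is a submodule $V'\subseteq V$ with $A + V' = V$ and $A\cap V' \subseteq \Rad V'$. I claim $V'$ is the desired Rad-supplement of $B$ in $C$. For the sum: $B + V' \supseteq (A + V') \cup B = V \cup B$ wait—more simply $B + V' = B + A + V' = B + V = C$ (using $A \subseteq B$). For the smallness condition I must show $B \cap V' \subseteq \Rad V'$. Now $B \cap V' \subseteq B \cap V \subseteq W$, and also $B \cap V' \subseteq A$? No—that need not hold. So instead: modding out $A$, the image of $V'$ in $C/A$ is $(V' + A)/A = V/A = \overline V$, and the image of $B\cap V'$ lies in $\overline B \cap \overline V \subseteq \Rad(\overline V) = \Rad(V'+A)/A$. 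Hmm, this is getting delicate; the honest approach is to combine the two pieces of information and invoke that Rad-supplements behave well, or to first replace $C$ by a minimal such $V$ and iterate. The real content I expect to lean on is: $V' $ has $A\cap V' \subseteq \Rad V'$, and $V'/(A\cap V') \cong V/A = \overline V$, with $\Rad(\overline V)$ controlling $(B\cap V' + (A\cap V'))/(A\cap V')$; since $A\cap V' \subseteq \Rad V'$, an element of $B\cap V'$ landing in $\Rad(V'/(A\cap V'))$ actually lies in $\Rad V'$ (because $\Rad$ is a radical: $\Rad(V'/(A\cap V')) = \Rad V'/(A\cap V')$ precisely when $A\cap V'\subseteq \Rad V'$). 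That is the key algebraic lemma that makes the argument close.

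So, in order, the steps are: (1) restrict the quotient $B/A \subseteq C/A$ and use ``$B/A$ Rad-supplementing'' to get $V/A$ with $B + V = C$ and $\overline{B}\cap\overline{V}\subseteq\Rad(\overline V)$; (2) use ``$A$ Rad-supplementing'' with $A \subseteq V$ to get $V' \subseteq V$ with $A + V' = V$, $A\cap V' \subseteq \Rad V'$; (3) verify $B + V' = C$ via $A \subseteq B$; (4) verify $B\cap V' \subseteq \Rad V'$: show $(B\cap V' + A\cap V')/(A\cap V') \subseteq \overline B \cap \overline{V'} \subseteq \Rad(V'/(A\cap V'))$, then invoke $A\cap V' \subseteq \Rad V' \Rightarrow \Rad(V'/(A\cap V')) = \Rad V'/(A\cap V')$ to conclude $B\cap V' \subseteq \Rad V'$. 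The main obstacle is exactly step (4): making sure the radical computation is legitimate, i.e. that one may pass the ``$\subseteq \Rad$'' condition through the quotient by $A\cap V'$ — this works only because $A\cap V' \subseteq \Rad V'$, which is guaranteed by step (2), and I would state the elementary fact ``$L \subseteq \Rad N \Rightarrow \Rad(N/L) = (\Rad N)/L$'' explicitly and use it there. Everything else (the sum identities, monic pullbacks) is routine diagram-and-modular-law bookkeeping analogous to the proofs of the preceding propositions.
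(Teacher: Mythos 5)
Your proposal is correct and follows essentially the same route as the paper: take a Rad-supplement $V/A$ of $B/A$ in $C/A$, then a Rad-supplement $V'$ of $A$ in $V$, and show $V'$ works. The only difference is in your step (4), where the paper invokes Xue's Lemma 1.1 on the composite epimorphism $V' \to V/A \to C/B$, while you carry out the same computation by hand via the (correct) identity $\Rad\bigl(V'/(A\cap V')\bigr) = (\Rad V')/(A\cap V')$ when $A\cap V' \subseteq \Rad V'$; these are the same argument.
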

\begin{proof}
Let $B\subseteq N$ be any extension of $B$. By hypothesis, there is a Rad-supplement $V/A$ of $B/A$ in $N/A$ and a Rad-supplement $W$ of $A$ in $V$. We claim that $W$ is a Rad-supplement of $B$ in $N$. We have epimorphisms $f: W \to V/A$ and $g:V/A \to N/B$ such that $\Ker f = W\cap A \subseteq \Rad W$ and $\Ker g = V/A \cap B/A \subseteq \Rad (V/A)$. Then $g \circ f : W\to N/B$ is an epimorphism such that $W\cap B = \Ker (g\circ f) \subseteq \Rad W$ by \cite[Lemma 1.1]{Xue:CharacterizationofSemiperfectandperfectrings}. Finally, $N = V + B = (W + A) + B = W + B$.
\end{proof}
\begin{remark}
The previous result holds for supplementing modules; see \cite[Lemma 1.3-(c)]{Zoschinger:ModulnDieInJederErweiterungEinKomplementHaben}.
\end{remark}

\begin{corollary}
If $M_1$ and $M_2$ are Rad-supplementing modules, then so is $M_1 \oplus M_2$.
\end{corollary}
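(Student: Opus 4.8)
The plan is to deduce this immediately from Proposition \ref{prop:if$A$and$B/A$are-Rad-supplementing-modulesThen-so-is-$B$}. First I would note that the property of being Rad-supplementing is invariant under isomorphism: if $M \cong M'$ and $M$ is Rad-supplementing, then any extension $M' \subseteq N$ transports via the isomorphism to an extension of $M$, a Rad-supplement is found there, and it transports back, so $M'$ is Rad-supplementing too. This is routine and I would state it in one line (or leave it implicit).

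Next, set $B = M_1 \oplus M_2$ and let $A$ be the canonical copy of $M_1$ inside $B$, i.e.\ $A = M_1 \oplus 0$. Then $A \cong M_1$ is Rad-supplementing by hypothesis and the isomorphism-invariance just noted, and $B/A \cong M_2$ is likewise Rad-supplementing. Applying Proposition \ref{prop:if$A$and$B/A$are-Rad-supplementing-modulesThen-so-is-$B$} to the pair $A \subseteq B$ yields that $B = M_1 \oplus M_2$ is Rad-supplementing, which is exactly the claim.

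There is essentially no obstacle here: all the real content is already in the proposition, whose proof combines a Rad-supplement $V/A$ of $B/A$ in $N/A$ with a Rad-supplement $W$ of $A$ in $V$ and checks (via \cite[Lemma 1.1]{Xue:CharacterizationofSemiperfectandperfectrings}) that $W$ works for $B$ in $N$. The only things to be careful about in the corollary are the harmless identification of $M_1$ with a submodule of the direct sum and of $M_2$ with the corresponding quotient. Finally, I would remark that an obvious induction on the number of summands then gives that any finite direct sum of Rad-supplementing modules is Rad-supplementing, matching the claim in the abstract.
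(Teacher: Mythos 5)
Your proposal is correct and is essentially identical to the paper's proof, which applies Proposition \ref{prop:if$A$and$B/A$are-Rad-supplementing-modulesThen-so-is-$B$} to the short exact sequence $0 \to M_1 \to M_1 \oplus M_2 \to M_2 \to 0$. Your explicit remarks on isomorphism invariance and the induction to finite direct sums are harmless additions that the paper leaves implicit.
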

\begin{proof}
Consider the short exact sequence $$0 \to M_1 \to M_1 \oplus M_2 \to M_2 \to 0 .$$ Thus the result follows by Proposition \ref{prop:if$A$and$B/A$are-Rad-supplementing-modulesThen-so-is-$B$}.
\end{proof}

Recall that $R$ is said to be a \emph{left hereditary} ring if every left ideal of $R$ is projective.
\begin{corollary}\label{cor:M/P(M)-rad-suppl-THEN-M-rad-suppl}
If $M/P(M)$ is Rad-supplementing, then $M$ is Rad-supplementing. For left hereditary rings, the converse is also true.
\end{corollary}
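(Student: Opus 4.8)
The plan is to prove Corollary~\ref{cor:M/P(M)-rad-suppl-THEN-M-rad-suppl} in two parts, using Corollary~\ref{cor:P(M)-Rad-supplementing} and Proposition~\ref{prop:if$A$and$B/A$are-Rad-supplementing-modulesThen-so-is-$B$} for the forward direction, and a property of left hereditary rings for the converse.

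\begin{proof}
Suppose $M/P(M)$ is Rad-supplementing. By Corollary~\ref{cor:P(M)-Rad-supplementing}, the submodule $P(M)$ is Rad-supplementing. Since $P(M)$ and $M/P(M)$ are both Rad-supplementing, Proposition~\ref{prop:if$A$and$B/A$are-Rad-supplementing-modulesThen-so-is-$B$} applied to the submodule $A=P(M)$ of $B=M$ yields that $M$ is Rad-supplementing.

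Now assume $R$ is left hereditary and $M$ is Rad-supplementing. We must show $M/P(M)$ is Rad-supplementing. The key point is that over a left hereditary ring, $P(M)$ is a direct summand of $M$: indeed, $P(M)$ is a radical module, hence $P(M)=\Rad P(M)$; over a left hereditary ring a radical module is injective (equivalently, every radical module is a direct sum of injective hulls of simple modules, or more directly, submodules of injectives are injective so the radical submodule $P(M)$ splits off). Thus $M = P(M)\oplus M'$ for some submodule $M'\cong M/P(M)$. Since $M$ is Rad-supplementing, its direct summand $M'$ is Rad-supplementing by Proposition~\ref{prop:if$A$and$B/A$are-Rad-supplementing-modulesThen-so-is-$B$}... more precisely by the proposition stating that every direct summand of a Rad-supplementing module is Rad-supplementing. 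Hence $M/P(M)\cong M'$ is Rad-supplementing.
\end{proof}

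The main obstacle is the converse direction: one needs the fact that over a left hereditary ring $P(M)$ is a direct summand of $M$. This follows because a radical module over a left hereditary ring is injective (a quotient of an injective is injective over a hereditary ring, and every module is a quotient of a radical-free... more carefully, one uses that $P(M)$, being radical, is a homomorphic image of a radical module that one can realize inside an injective, or one cites the known structure result that radical modules over hereditary rings are injective); once $P(M)$ is injective it splits as a direct summand, and then the earlier proposition on direct summands finishes the argument. If a cleaner justification that $P(M)$ splits is available in the literature being followed, it should be cited here in place of the sketch.
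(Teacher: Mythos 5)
The forward direction of your proof is correct and is exactly the paper's argument: $P(M)$ is Rad-supplementing by Corollary~\ref{cor:P(M)-Rad-supplementing}, and Proposition~\ref{prop:if$A$and$B/A$are-Rad-supplementing-modulesThen-so-is-$B$} applied to $A=P(M)\subseteq B=M$ gives that $M$ is Rad-supplementing.

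The converse direction has a genuine gap. Everything rests on your claim that over a left hereditary ring every radical module is injective, so that $P(M)$ is injective and splits off as a direct summand; you never prove this, and the justifications you sketch do not work. The parenthetical ``submodules of injectives are injective'' is false over any non-semisimple ring (it would force every module to be injective); what left hereditary rings give you is that \emph{quotients} of injectives are injective, and $P(M)$ is not exhibited as a quotient of an injective module. The equality radical $=$ divisible $=$ injective is a real theorem over Dedekind domains, but for a general left hereditary ring the statement ``radical implies injective'' is not a standard fact, and you supply neither a proof nor a reference --- indeed you concede the point by asking for a citation. The paper takes a route that avoids any splitting of $P(M)$: Lemma~\ref{lemma:if C/A injectve then we have the following commutative diagram} and the proposition following it show that for $A\subseteq B\subseteq C$ with $C/A$ injective, $B$ Rad-supplementing implies $B/A$ Rad-supplementing; taking $C=E(B)$ over a left hereditary ring makes $C/A$ a quotient of an injective, hence injective, so \emph{every} factor module of a Rad-supplementing module is Rad-supplementing (Corollary~\ref{cor:over-left-heresitary-rings-factor-module-Rad-supplementing}), and applying this with $A=P(M)$ finishes the converse. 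If you want to keep your approach, you must actually establish that $P(M)$ is a direct summand of $M$ over an arbitrary left hereditary ring, which is a substantially stronger claim than the corollary needs.
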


\begin{proof}
Since $P(M)$ is Rad-supplementing by Corollary \ref{cor:P(M)-Rad-supplementing}, the result follows by Proposition \ref{prop:if$A$and$B/A$are-Rad-supplementing-modulesThen-so-is-$B$}. Over left hereditary rings, any factor module of a Rad-supplementing module is Rad-supplementing (see Corollary \ref{cor:over-left-heresitary-rings-factor-module-Rad-supplementing}).
\end{proof}

We give the proof of the following known fact for completeness.
\begin{lemma}\label{prop:everySimpleModule-DIRECT-summand-OR-small}
Every simple submodule $S$ of a module $M$ is either a direct summand of $M$ or small in $M$.
\end{lemma}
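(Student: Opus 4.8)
The plan is to argue by considering whether $S$ is small in $M$ or not, and showing that failure of smallness forces $S$ to be a direct summand. First I would suppose $S$ is not small in $M$. By the definition of a small submodule, this means there exists a proper submodule $T \subsetneq M$ with $S + T = M$. The key observation is that $S \cap T$ is a submodule of the simple module $S$, so either $S \cap T = S$ or $S \cap T = 0$. In the first case $S \subseteq T$, which would give $M = S + T = T$, contradicting properness of $T$; hence $S \cap T = 0$.

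Once $S \cap T = 0$ and $S + T = M$, we have $M = S \oplus T$ by the definition of (internal) direct sum, so $S$ is a direct summand of $M$. This completes one branch. The other branch is immediate: if $S$ is small in $M$, there is nothing to prove, since the statement is a disjunction. Putting the two cases together yields the claim: every simple submodule is either small or a direct summand.

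There is no real obstacle here; the only point requiring a moment's care is the use of simplicity to conclude that the intersection $S \cap T$ is either everything or zero, and then ruling out the ``everything'' alternative by the properness of $T$. I would present this as a short two-case argument with no appeal to earlier results beyond the basic definitions of small submodule and direct summand recalled in the introduction.
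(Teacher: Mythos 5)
Your argument is correct and is essentially the same as the paper's: assume $S$ is not small, take a proper $T$ with $S+T=M$, use simplicity to force $S\cap T=0$, and conclude $M=S\oplus T$. You merely spell out the elimination of the case $S\cap T=S$ a bit more explicitly than the paper does.
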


\begin{proof}
Suppose that $S$ is not small in $M$, then there exists a proper submodule $K$ of $M$ such that $S + K = M$. Since $S$ is simple and  $K \neq M$,  $S\cap K = 0$. Thus $M = S \oplus K$.
\end{proof}

\begin{proposition}\label{cor:every-SIMPLE-module-issupplementing}
Every simple module is (Rad-)supplementing.
\end{proposition}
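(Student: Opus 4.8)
The plan is to prove that every simple module $S$ is supplementing, since the Rad-supplementing case follows immediately from Proposition \ref{prop:radical-modules-are-rad-supp.}. So let $S$ be a simple module and let $N$ be any extension of $S$, i.e.\ $S \subseteq N$. I must produce a supplement of $S$ in $N$, that is, a submodule $V \subseteq N$ with $S + V = N$ and $S \cap V \ll V$.

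First I would apply Lemma \ref{prop:everySimpleModule-DIRECT-summand-OR-small} to the simple submodule $S$ of $N$: either $S$ is a direct summand of $N$ or $S \ll N$. In the first case, write $N = S \oplus V$ for some submodule $V \subseteq N$; then $S + V = N$ and $S \cap V = 0 \ll V$, so $V$ is a supplement of $S$ in $N$. In the second case, $S \ll N$, and then $V = N$ itself works: $S + N = N$ and $S \cap N = S \ll N$. Thus in either case $S$ has a supplement in $N$, so $S$ is supplementing, hence Rad-supplementing.

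There is essentially no obstacle here; the only thing worth being careful about is invoking the right dichotomy (Lemma \ref{prop:everySimpleModule-DIRECT-summand-OR-small} is exactly tailored to this) and checking that smallness is the relevant condition — the definition of supplement requires $S \cap V \ll V$ (smallness \emph{in} $V$), and in the second case $V = N$ so one needs $S \ll N$, which is precisely what the lemma gives. One could even phrase the whole argument uniformly: a simple submodule $S$ of any module $N$ always has a supplement in $N$, namely a complement summand when $S$ is a summand and $N$ itself when $S$ is small, and this is what makes every simple module supplementing.
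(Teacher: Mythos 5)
Your proof is correct and follows essentially the same route as the paper: invoke Lemma \ref{prop:everySimpleModule-DIRECT-summand-OR-small} to split into the cases where $S$ is a direct summand (take the complement as supplement) or $S \ll N$ (take $N$ itself), and then note that supplementing implies Rad-supplementing. The checks you make of the smallness condition in each case are exactly the ones needed.
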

\begin{proof}
Let $S$ be a simple module and  $N$ be any extension of $S$. Then by Lemma \ref{prop:everySimpleModule-DIRECT-summand-OR-small}, $S \ll N $ or $S \oplus S'= N$ for a submodule $S' \subseteq N$. In the first case, $N$ is a (Rad-)supplement of $S$ in $N$, and in the second case, $S'$ is a (Rad-)supplement of $S$ in $N$. So, in each case $S$ has a (Rad-)supplement in $N$, that is, $S$ is (Rad-)supplementing.
\end{proof}

\begin{theorem}\label{thm:every-module-with-composition-series-is-rad-supp.}
Every module with composition series is (Rad-)supplementing.
\end{theorem}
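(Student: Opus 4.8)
The plan is to argue by induction on the composition length $n$ of $M$, using Proposition~\ref{prop:if$A$and$B/A$are-Rad-supplementing-modulesThen-so-is-$B$} (and its analogue for supplementing modules recorded in the following remark) as the inductive engine. If $M$ has a composition series of length $n$, then there is a submodule $A\subseteq M$ with $A$ of composition length $n-1$ and $M/A$ simple. By Proposition~\ref{cor:every-SIMPLE-module-issupplementing} the simple module $M/A$ is (Rad-)supplementing, and by the induction hypothesis $A$ is (Rad-)supplementing; hence by Proposition~\ref{prop:if$A$and$B/A$are-Rad-supplementing-modulesThen-so-is-$B$} so is $M$. The base case $n=1$ is exactly Proposition~\ref{cor:every-SIMPLE-module-issupplementing}, and the case $n=0$ (the zero module) is trivial since $0$ has a supplement, namely any extension itself, in every extension.

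First I would make explicit that having a composition series of length $n$ gives such a filtration $0=M_0\subset M_1\subset\cdots\subset M_n=M$ with each $M_i/M_{i-1}$ simple; taking $A=M_{n-1}$ furnishes the short exact sequence $0\to A\to M\to M/A\to 0$ with $A$ of shorter length and $M/A$ simple. Then I would invoke the two ingredients above and conclude. For the plain ``supplementing'' version one uses \cite[Lemma 1.3-(c)]{Zoschinger:ModulnDieInJederErweiterungEinKomplementHaben} in place of Proposition~\ref{prop:if$A$and$B/A$are-Rad-supplementing-modulesThen-so-is-$B$}; for the Rad-version one uses Proposition~\ref{prop:if$A$and$B/A$are-Rad-supplementing-modulesThen-so-is-$B$} directly.

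I do not expect any serious obstacle here: the statement is essentially a formal consequence of the ``extension closure'' already established, combined with the fact that simple modules are (Rad-)supplementing. The only point requiring a moment's care is the organization of the induction — one must induct on the full class of modules of a given length rather than on a fixed module, so that the hypothesis applies to the submodule $A$, which is itself an arbitrary module of length $n-1$; this is routine. It is also worth noting that the result subsumes the easy fact that every semisimple module of finite length is (Rad-)supplementing, but the argument makes no use of semisimplicity.
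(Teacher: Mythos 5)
Your proof is correct and follows the paper's argument exactly: induction on the composition length, with the base case given by Proposition~\ref{cor:every-SIMPLE-module-issupplementing} and the inductive step supplied by Proposition~\ref{prop:if$A$and$B/A$are-Rad-supplementing-modulesThen-so-is-$B$} applied to $0\to M_{n-1}\to M\to M/M_{n-1}\to 0$. No issues.
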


\begin{proof}
Let $0=M_0 \subseteq M_1 \subseteq M_2 \subseteq \cdots \subseteq M_n = M$ be a composition series of a module $M$. The proof is by induction on $n \in \mathbb{N}$.  If $n=1$, then $M= M_1$ is simple, and so $M$ is (Rad-)supplementing by Proposition \ref{cor:every-SIMPLE-module-issupplementing}. Suppose that this is true for each $k \leq n-1$. Then $M_{n-1}$ is (Rad-)supplementing. Since $M_n/M_{n-1}$ is also (Rad-)supplementing as a simple module, we obtain by Proposition \ref{prop:if$A$and$B/A$are-Rad-supplementing-modulesThen-so-is-$B$} that $M = M_n$ is (Rad-)supplementing.
\end{proof}

\begin{corollary}
A finitely generated semisimple module is (Rad-)supplementing.
\end{corollary}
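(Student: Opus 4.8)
The plan is to note that the statement is nothing more than a specialization of Theorem~\ref{thm:every-module-with-composition-series-is-rad-supp.}. First I would recall the standard structure fact that a semisimple module $M$ decomposes as $M=\bigoplus_{i\in I}S_i$ with every $S_i$ simple, and that finite generation of $M$ forces the index set $I$ to be finite, say $I=\{1,\dots,n\}$; this is the only place where the hypothesis ``finitely generated'' actually enters. Then the chain
$$0\subseteq S_1\subseteq S_1\oplus S_2\subseteq\cdots\subseteq S_1\oplus\cdots\oplus S_n=M$$
has each consecutive quotient $(S_1\oplus\cdots\oplus S_k)/(S_1\oplus\cdots\oplus S_{k-1})\cong S_k$ simple, so it is a composition series for $M$. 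Hence $M$ is (Rad-)supplementing by Theorem~\ref{thm:every-module-with-composition-series-is-rad-supp.}.

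Alternatively, one can bypass composition series and argue directly by induction on $n$: each simple summand $S_i$ is (Rad-)supplementing by Proposition~\ref{cor:every-SIMPLE-module-issupplementing}, and Proposition~\ref{prop:if$A$and$B/A$are-Rad-supplementing-modulesThen-so-is-$B$}, applied to the split short exact sequence $0\to S_1\oplus\cdots\oplus S_{n-1}\to M\to S_n\to 0$, lets the property propagate up the finite direct sum. There is no genuine obstacle here; the corollary falls out immediately once $M$ has been written as a finite direct sum of simple modules, which is where — and only where — the finite generation hypothesis is used.
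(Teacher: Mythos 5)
Your proof is correct and matches the paper's intent: the corollary is stated immediately after Theorem~\ref{thm:every-module-with-composition-series-is-rad-supp.} precisely because a finitely generated semisimple module is a finite direct sum of simples and hence has a composition series. Both your main argument and your alternative induction are fine; no gaps.
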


In general, a factor module of a Rad-supplementing module need not be
Rad-supplementing. To give such a counterexample we need the following result.

Recall that a ring $R$ is called \emph{Von Neumann regular} if every element $a\in R$ can be written in the form $a x a$, for some $x\in R$.
\begin{proposition}\label{prop:Von-neum.regular-Inj=Rad-supplementing}
Let $R$ be a commutative Von Neumann regular ring. Then an $R$-module  $M$ is Rad-supplementing  if and only if $M$ is injective.
\end{proposition}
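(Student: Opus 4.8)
The key observation is that over a commutative Von Neumann regular ring $R$, every module has zero radical. Indeed, $R/J$ is Von Neumann regular with $J = 0$ in this case, so $R$ is a semisimple... no wait, that's not right either — Von Neumann regular rings need not be semisimple. Let me reconsider.

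The crucial fact is: if $R$ is Von Neumann regular, then $J(R) = 0$, and more importantly, for any $R$-module $M$, every finitely generated submodule is a direct summand (since finitely generated submodules are generated by idempotents in the regular case). This forces $\Rad M = 0$ for every $R$-module $M$: if $x \in \Rad M$, then $Rx$ is a direct summand of $M$, hence $Rx \ll M$ together with $Rx$ being a summand forces $Rx = 0$.

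\medskip

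Here is the plan. First I would establish that $\Rad M = 0$ for every $R$-module $M$ when $R$ is commutative Von Neumann regular. The argument: take $x \in \Rad M$; since $R$ is Von Neumann regular, the cyclic submodule $Rx$ is a direct summand of $M$, say $M = Rx \oplus M'$. But $\Rad M$ consists of elements lying in every maximal submodule, so $Rx \subseteq \Rad M$ is small in $M$ (a submodule generated by elements of the radical of a... ) — more carefully, $Rx \subseteq \Rad M$ and $Rx$ a direct summand means $Rx \cap (\text{a complement that together with the other summand of } Rx \text{ gives } M)$, and a small direct summand must be zero. Hence $x = 0$.

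\medskip

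Once we know every $R$-module has zero radical, the proposition becomes nearly immediate. For the forward direction: suppose $M$ is Rad-supplementing. Embed $M$ into $E(M)$ (or any extension $N$); by hypothesis $M$ has a Rad-supplement $V$ in $N$, so $M + V = N$ and $M \cap V \subseteq \Rad V = 0$. Thus $N = M \oplus V$, so $M$ is a direct summand of every extension of itself, which means $M$ is injective. Conversely, if $M$ is injective, then $M$ is a direct summand of any extension $N$, say $N = M \oplus V$; then $V$ is a Rad-supplement of $M$ in $N$ because $M + V = N$ and $M \cap V = 0 \subseteq \Rad V$. Hence $M$ is Rad-supplementing.

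\medskip

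The main obstacle is establishing cleanly that every module over a commutative Von Neumann regular ring has zero radical — this requires knowing that finitely generated submodules of arbitrary modules are direct summands, which follows from the fact that over a Von Neumann regular ring every finitely generated (left) ideal, hence every finitely generated submodule of a free module, hence (by a standard argument) every finitely generated submodule, is a direct summand. Alternatively one can cite that Von Neumann regular rings are precisely the rings over which every module is "flat and has zero Jacobson-type radical" — but the submodule argument is the honest route. Everything after that is routine.
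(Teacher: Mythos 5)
Your overall strategy is exactly the paper's: reduce everything to the fact that over a commutative Von Neumann regular ring every module has zero radical, after which a Rad-supplement $V$ of $M$ in $N$ satisfies $M\cap V\subseteq \Rad V=0$, so $N=M\oplus V$ and $M$ is injective, while the converse is trivial. That part of your argument is fine and matches the paper, which simply cites \cite[3.73 and 3.75]{Lam:LecturesOnModulesAndRings} for the radical-zero fact.

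The gap is in your proposed proof of that key fact. You claim that over a Von Neumann regular ring every finitely generated (in particular every cyclic) submodule of an \emph{arbitrary} module is a direct summand. That is false: the correct statement is that every finitely generated submodule of a \emph{projective} module is a direct summand. For a concrete failure, note that by Osofsky's theorem (cited in this very paper) a non-semisimple ring has a cyclic module $R/A$ that is not injective; taking $M=E(R/A)$ and $x=1+A$, the cyclic submodule $Rx=R/A$ is essential and proper in $M$, hence cannot be a direct summand. (Your subsidiary observation that $x\in\Rad M$ implies $Rx\ll M$, so a cyclic direct summand inside the radical must vanish, is correct --- but it rests on the false premise.) The honest route to $\Rad M=0$ is different: a commutative ring is Von Neumann regular if and only if every simple module is injective (Kaplansky/Villamayor; this is the content of Lam's 3.73 and 3.75). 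Given that, for $0\neq x\in M$ choose a maximal submodule $L$ of the cyclic module $Rx$; the simple module $Rx/L$ is injective, so the projection $Rx\to Rx/L$ extends to $M$, and its kernel is a maximal submodule of $M$ missing $x$. Hence $\Rad M=0$. With this repair (or simply citing the V-ring property, as the paper does), the rest of your argument goes through.
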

\begin{proof}
Suppose that $M$ is a Rad-supplementing module. Let  $M \subseteq N$ be any extension of $M$. Then there is a Rad-supplement $V$
of $M$ in $N$, that is, $V + M = N$ and $V \cap M \subseteq \Rad V$. Since all $R$-modules have zero radical by \cite[3.73 and 3.75]{Lam:LecturesOnModulesAndRings}, we have $\Rad V= 0$, and so  $N = V \oplus M$. Conversely, if $M$ is injective and $M \subseteq N$ is any extension of $M$, then $N = M \oplus
K$ for some submodule $K \subseteq N$. Thus $K$ is a Rad-supplement of $M$ in $N$.
\end{proof}

 It is known that a ring $R$ is lefty hereditary if and only if every quotient of an injective $R$-module is injective (see \cite[Ch.I, Theorem 5.4]{Cartan-Eilenberg}).
\begin{example}\label{exm2}
Let $\displaystyle R = \prod _{i\in I} F_i$ be a ring, where each $F_i$
is a field for an infinite index set $I$. Then $R$ is a commutative Von Neumann regular ring. Indeed, let $a = (a_i)_{i\in I}\in R$ where $a_i \in F_i $ for all $i \in I$. Taking $b = (b_i)_{i\in I}\in R$ where $b_i \in F_i $ such that
$$
b_i = \left\{ \begin{array}{ll}
a_i^{-1} & \text{ if $a_i \neq 0)$}\\
0 & \text{ if $a_i =0$}\\
\end{array} \right.
$$
Then we obtain that $$aba = (a_i)_I (b_i)_I (a_i)_I = (a_i b_i a_i)_{i\in I} = (a_i)_{i\in I} = a .$$
Now, by Proposition \ref{prop:Von-neum.regular-Inj=Rad-supplementing}, $R$ is a Rad-supplementing module over itself since it is injective (see \cite[Corollary 3.11B]{Lam:LecturesOnModulesAndRings}). Since $R$ is not  noetherian, it cannot be semisimple (by \cite[Corollary 2.6]{Lam:AFirstCourseInNoncommutativeRings}). Thus $R$ is not hereditary by \cite[Corollary]{Osofsky:Rings-All-Of-whoseFG-modulesInjective}. Hence, there is a factor module of $R$ which is not injective.
\end{example}

The following technical lemma will be useful to show that Rad-supplementing modules are closed under factor modules, under a special condition.

\begin{lemma}\label{lemma:if C/A injectve then we have the following commutative diagram}
Let $A \subseteq B \subseteq C$ be modules with $C/A$ injective. Let $N$ be a module containing $B/A$. Then there exists a commutative diagram with exact rows:
$$\xymatrix {0 \ar[r] & A  \ar@{^{(}->}[r]  \ar[d]_{id}  &  B \ar[r]  \ar[d] &  B/A \ar[r]  \ar@{^{(}->}[d] &  0 \\
0 \ar[r] & A \ar[r]  &  P \ar[r]  &  N \ar[r]  &  0 } .$$
\end{lemma}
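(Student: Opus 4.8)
The statement to prove is a pushout-type construction: given $A \subseteq B \subseteq C$ with $C/A$ injective and an embedding $B/A \hookrightarrow N$, produce a module $P$ fitting into the displayed commutative diagram with exact rows. The natural candidate for $P$ is a pushout. Specifically, I would form the pushout of the two maps $B \to B/A \hookrightarrow N$ and $B \to C/A$ (the latter being the composite $B \twoheadrightarrow B/A \hookrightarrow C/A$), but it is cleaner to realize $P$ directly as a submodule of $C/A \oplus N$ or, better, to use the injectivity of $C/A$ to split off the obstruction. Let me describe the cleaner route.

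First I would record the map $\pi\colon C \to C/A$ and note that its restriction to $B$ has image $B/A$, which by hypothesis sits inside $N$; call this inclusion $\iota\colon B/A \hookrightarrow N$. Now consider the inclusion $B/A \hookrightarrow C/A$; since $C/A$ is injective and $B/A \subseteq N$, the inclusion $B/A \hookrightarrow C/A$ extends to a homomorphism $\varphi\colon N \to C/A$ with $\varphi|_{B/A} = $ the inclusion $B/A \hookrightarrow C/A$. Then I would define $P$ as the pullback (fibre product) of $C \xrightarrow{\pi} C/A \xleftarrow{\varphi} N$, i.e. $P = \{(c,n) \in C \oplus N \mid \pi(c) = \varphi(n)\}$. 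The map $A \to P$ is $a \mapsto (a,0)$ (well-defined since $\pi(a) = 0 = \varphi(0)$), and the map $P \to N$ is $(c,n) \mapsto n$. One checks: this last map is surjective because $\pi$ is surjective (given $n \in N$, pick $c \in C$ with $\pi(c) = \varphi(n)$), its kernel is $\{(c,0) : \pi(c)=0\} = \{(a,0): a\in A\} \cong A$, so the bottom row is exact. The left square commutes by construction, and the right square: the map $B \to P$ should be $b \mapsto (b, \iota(\pi(b)))$ — this lands in $P$ since $\varphi(\iota(\pi(b))) = \pi(b)$ because $\varphi$ restricts to the inclusion on $B/A$ — and then $B \to P \to N$ sends $b \mapsto \iota(\pi(b))$, which is exactly $B \to B/A \hookrightarrow N$, so the right square commutes. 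The top-row map $A \to B$ is the given inclusion, and compatibility with $A \to P$ is immediate.

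The main thing to verify carefully, and the step I expect to be the real content, is that the left-hand vertical map is the identity on $A$ in the precise sense the diagram demands, and that the middle vertical $B \to P$ makes the \emph{entire} left square commute with $\mathrm{id}_A$ — i.e. that the composite $A \hookrightarrow B \to P$ equals $A \to P$, $a \mapsto (a,0)$. This holds since $b = a \in A$ maps to $(a, \iota(\pi(a))) = (a, \iota(0)) = (a,0)$. I would also double-check exactness of the top row is just the given data ($0 \to A \to B \to B/A \to 0$) and that $B/A \hookrightarrow N$ is the stated embedding, so the only genuine verifications are the pullback computations above. The one subtlety worth a sentence is that the extension $\varphi$ of $B/A \hookrightarrow C/A$ along $B/A \hookrightarrow N$ exists precisely because $C/A$ is injective — this is the only place the hypothesis is used, and it is what allows the pullback to have the right kernel. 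No hard calculation is needed; the argument is a diagram chase organized around the injectivity of $C/A$.
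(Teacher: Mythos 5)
Your construction is correct, but it is organized differently from the paper's. You use the injectivity of $C/A$ \emph{first}, to extend the inclusion $B/A\hookrightarrow C/A$ along $B/A\hookrightarrow N$ to a map $\varphi\colon N\to C/A$, and then define $P$ as the pullback of $C\xrightarrow{\pi}C/A\xleftarrow{\varphi}N$; all the required properties (surjectivity of $P\to N$, identification of its kernel with $A$, commutativity of both squares) then follow by the short direct computations you give, and I checked that each of them goes through. The paper instead starts from the pushout of $N\hookleftarrow B/A\hookrightarrow C/A$, uses injectivity of $C/A$ to split the resulting bottom row, identifies the pushout with $(C/A)\oplus(N/(B/A))$, lifts to a sequence with middle term $C\oplus(N/(B/A))$, and finally takes $P$ to be a preimage under the induced surjection. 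The two routes are dual in spirit and use the injectivity hypothesis at exactly one (different) spot each; yours is shorter and avoids the splitting argument and the preimage step, at the mild cost of having to name the extension $\varphi$ explicitly. Either proof establishes the lemma as stated, since only the existence of some such $P$ is claimed.
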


\begin{proof}
By pushout we have the following commutative diagram, where $\varphi$ exists since $C/A$ is injective:
$$
\xymatrix{0 \ar[r] & B/A \ar@{^{(}->}[r] \ar@{^{(}->}[d] & N \ar[r] \ar[d]^g \ar[dl]_{(1)} | \varphi &  N/(B/A) \ar[r] \ar[d]^{id} \ar@{-->}[dl]_{\alpha}^{(2)}& 0 \\
0 \ar[r] & C/A \ar[r]^{\beta} & N' \ar[r] & N/(B/A) \ar[r] &  0 }
$$
In the diagram, since the triangle-(1) is commutative, there exists a homomorphism $\alpha: N/(B/A) \To N'$ making the triangle-(2) is commutative by \cite[Lemma I.8.4]{FuchsAndSalce:ModulesOverNonNoetherianDomains}. So, the second row splits. Then we can take $N' = (C/A) \oplus (N/(B/A))$, and so we may assume that $\beta : C/A \To N'$ is an inclusion. Therefore, we have the following commutative diagram since $B/A = \beta(B/A) = g(B/A) \subseteq N'$:
$$
\xymatrix{0 \ar[r] & A \ar@{^{(}->}[r] \ar[d]_{id} & B \ar[r] \ar[d]_{\phi}  & B/A \ar[r] \ar@{^{(}->}[d] & 0 \\
0 \ar[r] & A \ar[r]^-\gamma  & \quad C\oplus (N/(B/A)) \quad  \ar[r]^-\sigma & N' \ar[r] &  0 }
$$
where $\gamma (a) = (a, 0)$ for every $a\in A$, $\phi (b) = (b, 0)$ for every $b\in B$, and $\sigma (c, \overline{x}) = (c+ A, \overline{x})$ for every $c\in C$ and $\overline{x}\in N/(B/A)$.
Finally, taking $P = \sigma ^{-1} (g(N))$ and defining a homomorphism $\widetilde{\sigma}: P \To g(N)$ by  $\widetilde{\sigma} (x) = \sigma (x)$ for every $x\in P$ (in fact, $\widetilde{\sigma}$ is an epimorphism as so is $\sigma$),  we obtain the following desired commutative diagram:
$$
\xymatrix{0 \ar[r] & A \ar@{^{(}->}[r] \ar[d]_{id} & B \ar[r] \ar[d]  & B/A \ar[r] \ar[d] & 0 \\
0 \ar[r] & A \ar[r]  & P  \ar[r]^-{\widetilde{\sigma}} & g(N)\cong N \ar[r] &  0 }
$$
\end{proof}

\begin{proposition}
Let $A\subseteq B \subseteq C$ with $C/A$ injective. If $B$ is
Rad-supplementing, then so is $B/A$.
\end{proposition}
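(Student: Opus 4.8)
The plan is to reduce the claim about $B/A$ to the Rad-supplementing property of $B$ by transporting any extension of $B/A$ up to an extension of $B$ via Lemma~\ref{lemma:if C/A injectve then we have the following commutative diagram}. So let $B/A \subseteq N$ be an arbitrary extension. Applying the lemma, I obtain a commutative diagram with exact rows
$$
\xymatrix{0 \ar[r] & A \ar@{^{(}->}[r] \ar[d]_{id} & B \ar[r]^{\pi} \ar[d]_{\iota} & B/A \ar[r] \ar@{^{(}->}[d] & 0 \\
0 \ar[r] & A \ar[r] & P \ar[r]^-{\rho} & N \ar[r] & 0 } ,
$$
where $\iota$ is the left-hand map of the bottom-left square. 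The first thing to check is that $\iota$ is a monomorphism: this follows from a short diagram chase, since $\iota$ restricts to $\mathrm{id}_A$ on $A$ and induces the inclusion $B/A \hookrightarrow N$ on the quotients. Hence I may regard $B$ as a submodule of $P$.

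Next, since $B$ is Rad-supplementing, $B$ has a Rad-supplement $V$ in $P$: that is, $B + V = P$ and $B \cap V \subseteq \Rad V$. The natural candidate for a Rad-supplement of $B/A$ in $N$ is $\rho(V)$. I would verify this as follows. For the sum: $\rho(V) + (B/A) = \rho(V) + \rho(B) = \rho(V+B) = \rho(P) = N$, using that $\rho(B) = \pi(B)/(\text{stuff}) $; more precisely, commutativity of the diagram gives $\rho \circ \iota = (\text{inclusion}) \circ \pi$, so $\rho(B) = B/A$ inside $N$. For the intersection condition I want $(B/A) \cap \rho(V) \subseteq \Rad(\rho(V))$. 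Here the key point is that $\Ker \rho = A \subseteq B$, so for $v \in V$ with $\rho(v) \in B/A = \rho(B)$ we get $\rho(v) = \rho(b)$ for some $b \in B$, hence $v - b \in \Ker\rho = A \subseteq B$, so $v \in B$, i.e. $v \in B \cap V$. This shows $(B/A)\cap\rho(V) = \rho(B\cap V) \subseteq \rho(\Rad V) \subseteq \Rad(\rho(V))$, the last inclusion because $\rho$ is a homomorphism and radicals are carried into radicals. Thus $\rho(V)$ is a Rad-supplement of $B/A$ in $N$, and since $N$ was arbitrary, $B/A$ is Rad-supplementing.

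The one genuine subtlety, and the step I expect to need the most care, is the interplay between $\Ker\rho$ and $B$: the argument for the intersection condition crucially uses that $\Ker\rho = A$ sits inside the copy of $B$ in $P$, which is exactly what the commutative diagram from the lemma guarantees (the left vertical map is $\mathrm{id}_A$). Everything else is a routine diagram chase together with the standard facts that epimorphic images of small-type submodules behave well and that $f(\Rad X) \subseteq \Rad f(X)$ for any homomorphism $f$. I would also remark, for completeness, that the same proof with ``supplement'' in place of ``Rad-supplement'' shows the analogous statement for supplementing modules, and record the two corollaries the paper anticipates: taking $C = E(B)$ (so $C/A$ need not be injective in general, but when it is) and, in particular, the left hereditary case where every quotient of an injective is injective, giving Corollary~\ref{cor:over-left-heresitary-rings-factor-module-Rad-supplementing}.
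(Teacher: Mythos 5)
Your proof is correct and follows essentially the same route as the paper: invoke Lemma~\ref{lemma:if C/A injectve then we have the following commutative diagram} to embed $B$ into an extension $P$ of $B$ mapping onto $N$, take a Rad-supplement $V$ of $B$ in $P$, and push it down to $\rho(V)$ in $N$. Your explicit observation that $\Ker\rho = A$ lies inside the copy of $B$ in $P$ (which justifies $(B/A)\cap\rho(V)=\rho(B\cap V)$) is exactly the point the paper's computation relies on, so nothing further is needed.
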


\begin{proof}
Let $B/A \subseteq N$ be any extension of $B/A$. By Lemma \ref{lemma:if C/A injectve then we
have the following commutative diagram}, we have the following
commutative diagram with exact rows since $C/A$ is injective:
$$
\xymatrix {0 \ar[r] & A  \ar@{^{(}->}[r]  \ar[d]^{id}  &  B \ar[r]^{\sigma}  \ar[d]^h &  B/A \ar[r]  \ar@{^{(}->}[d]^f &  0 \\
0 \ar[r] & A \ar[r]  &  P \ar[r]^g   &  N \ar[r]  &  0 } .
$$
Since $h$ is monic and $B$ is Rad-supplementing, $B\cong \Image h$ has a Rad-supplement
 in $P$, say $V$, that is, $\Image h + V = P$ and $\Image h \cap V \subseteq
\Rad V$. We claim that $g (V)$ is a Rad-supplement of $B/A$ in $N$.
$$
 N = g(P) = g(h(B)) + g(V) = (f \sigma) (B) + g(V) =
(B/A) + g(V), \text{ and}
$$
$$(B/A)\cap g(V) = f(\sigma (B))\cap
g(V) = g[h(B) \cap V] \subseteq g(\Rad V)
\subseteq \Rad (g(V)).
$$
\end{proof}

\begin{corollary}\label{cor:over-left-heresitary-rings-factor-module-Rad-supplementing}
If $R$ is a left hereditary ring, then every factor module of Rad-supplementing module is Rad-supplementing.
\end{corollary}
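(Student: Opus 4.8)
The plan is to obtain this as a one-line reduction to the Proposition just proved, namely the one asserting that if $A\subseteq B\subseteq C$ are modules with $C/A$ injective and $B$ is Rad-supplementing, then $B/A$ is Rad-supplementing. Given a Rad-supplementing module $B$, every factor module of $B$ has the form $B/A$ for some submodule $A\subseteq B$, so it suffices, for each such $A$, to produce a module $C$ with $A\subseteq B\subseteq C$ and $C/A$ injective, and then invoke that Proposition.

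First I would take $C=E(B)$, the injective envelope of $B$, so that $A\subseteq B\subseteq C$. Since $R$ is left hereditary, every quotient of an injective left $R$-module is again injective by \cite[Ch.~I, Theorem~5.4]{Cartan-Eilenberg}; applying this to the injective module $E(B)$ shows that $C/A=E(B)/A$ is injective. Thus the hypotheses of the preceding Proposition are met, and we conclude that $B/A$ is Rad-supplementing, as required.

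There is essentially no obstacle: the whole content sits in the previous Proposition, and the left hereditary hypothesis is used only to guarantee that $E(B)/A$ is injective. (One could replace $E(B)$ by any injective module containing $B$, but the injective envelope is the most economical choice.)
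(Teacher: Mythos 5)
Your argument is exactly the one the paper intends: the corollary is stated without proof precisely because it follows from the preceding Proposition by taking $C=E(B)$ and using the standard fact (quoted in the paper before Example \ref{exm2}) that over a left hereditary ring every quotient of an injective module is injective. The proposal is correct and matches the paper's approach.
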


\begin{proposition}\label{prop:If-M-reduced-projective-Rad-supplementing-then-It-has-small-radical}
If $M$ is a reduced, projective and Rad-supplementing module, then $\Rad M \ll M$.
\end{proposition}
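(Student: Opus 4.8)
The plan is to use that $M$ is Rad-supplementing to produce a Rad-supplement of $M$ inside a suitable extension, and then pull back via projectivity to control $\Rad M$. Concretely, since $M$ is projective, write $M$ as a direct summand of a free module $F = M \oplus M'$, and let $\pi\colon F \to F/\Rad M$ be the canonical projection. I would embed $M$ into a module $N$ that "witnesses" $\Rad M$ — the natural candidate is $N = F/\Rad M$ together with the composite $M \hookrightarrow F \to F/\Rad M$, but one must be careful that this map is injective. An alternative, cleaner choice: consider the pushout or simply work with $F/\Rad F$-type constructions so that the image of $M$ meets the "radical part" trivially. In any case, the first step is to set up an extension $N$ of $M$ such that a Rad-supplement $V$ of $M$ in $N$ forces, after a projection back to $M$, a decomposition $M = M \cap (\text{something}) + (\text{something small})$.

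Here is the concrete route I expect to work. Let $F = M \oplus M'$ be free, and let $K = \Rad M \oplus M' \subseteq F$; note $F/K \cong M/\Rad M$. Instead of quotienting, I would use that $M$ embeds in $N := F$ with the twist of replacing the $M'$-summand by a copy of $M'$ modulo nothing — that is not quite an extension. So the honest approach: take $N$ to be an extension of $M$ in which $\Rad M$ becomes small-complemented. Actually the key observation should be that, being Rad-supplementing, $M$ has a Rad-supplement $V$ in $E(M)$ or in $F$; applying $M$'s own Rad-supplementing property to the inclusion $M \subseteq M$ gives $M$ itself as a Rad-supplement, which is vacuous. So the extension has to be nontrivial. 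I would take the pushout of $M \hookleftarrow \Rad M \to 0$ over $\Rad M \subseteq M$ — but $\Rad M$ need not split. Rethinking: the cleanest is to embed $M$ into $M \oplus (M/\Rad M)$ diagonally? That embeds $M$ but $M/\Rad M$ is a factor, not obviously giving what we want either.

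The mechanism I am confident in: use that $M$ is a direct summand of a free module $F=M\oplus M'$, and that $F$ has a Rad-supplement situation forced by $M$ being Rad-supplementing is the wrong direction — rather, apply the hypothesis to the inclusion of $M$ into $N = \bigl(M \oplus M'\bigr)\big/\, \Gamma$ where $\Gamma$ is the graph of the projection $M \to M/\Rad M$ lifted suitably, arranged so $N \cong F/\Rad M$ yet $M \to N$ is monic because $M \cap \Rad F \subseteq \Rad M$ and $M$ reduced kills the discrepancy. Then a Rad-supplement $V$ of $M$ in $N$ satisfies $M+V=N$ and $M\cap V \subseteq \Rad V$; pulling back along $F \to N$ and projecting $F \to M$, one gets $M = \Rad M + g(V')$ for a submodule $g(V') \subseteq M$, and using $M\cap V \subseteq \Rad V$ together with reducedness/projectivity one upgrades this to $g(V')=M$ with $\Rad M \ll M$.

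I expect the main obstacle to be exactly the construction of the witnessing extension $N$ and verifying that $M \hookrightarrow N$ is injective — this is where the hypotheses "reduced" and "projective" must both be genuinely used (reducedness to ensure no radical submodule of $M$ is absorbed into $\Rad N$, projectivity to lift and to realize $M$ as a summand of a free module). Once $N$ is in place, the passage from the Rad-supplement $V$ to the smallness of $\Rad M$ is a routine diagram chase with the projection maps, analogous to the projection arguments already used in the proof that direct summands of Rad-supplementing modules are Rad-supplementing, invoking \cite[Lemma 1.1]{Xue:CharacterizationofSemiperfectandperfectrings} if needed to control radicals of images.
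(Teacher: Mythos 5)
Your proposal does not reach a proof: every concrete extension $N$ you consider is one you yourself acknowledge fails (the map $M\to F/\Rad M$ is not injective, the diagonal embedding ``not obviously giving what we want'', the graph construction left undefined), and the final paragraph asserts that ``one upgrades this to $g(V')=M$'' without any argument. The claim that reducedness ``kills the discrepancy'' and makes $M\to F/\Rad M$ monic is not correct: reducedness of $M$ says $M$ has no nonzero submodule $U$ with $\Rad U=U$, which does not prevent $\Rad M\neq 0$ from being killed by that quotient map. More importantly, the whole strategy of hunting for a nontrivial \emph{over}module $N\supsetneq M$ is pointed in the wrong direction, and you explicitly dismiss the direction that works when you say that applying the Rad-supplementing property ``to the inclusion $M\subseteq M$ \dots is vacuous.''

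The paper's argument applies the hypothesis not to $M$ inside something bigger, but to a (possibly proper) isomorphic copy of $M$ \emph{inside $M$ itself}. Suppose $X+\Rad M=M$. Projectivity gives $f\in\End(M)$ with $\Image f\subseteq X$ and $\Image(1-f)\subseteq \Rad M=JM$ (lift $M\to M/\Rad M$ through the epimorphism $X\to M/\Rad M$), and Beck's theorem \cite{Beck:Projective-and-free-modules} shows such an $f$ is monic. Since $\Image f\cong M$ is Rad-supplementing, it has a Rad-supplement $V$ in the extension $M$ of $\Image f$; because $M=\Image f+\Rad M$, the quotient $M/\Image f$ is radical, and the epimorphism $V\to M/\Image f$ with kernel $V\cap\Image f\subseteq\Rad V$ forces $\Rad V=V$ via \cite[Lemma 1.1]{Xue:CharacterizationofSemiperfectandperfectrings}. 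Reducedness then gives $V=0$, hence $M=\Image f\subseteq X$. The two ingredients your outline is missing are exactly this use of an endomorphism $f$ with $\Image(1-f)\subseteq JM$ (where projectivity and Beck's theorem enter) and the observation that the relevant ``extension'' is $M$ itself over the submodule $\Image f$, not some larger module over $M$.
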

\begin{proof}
Suppose $X+\Rad M = M$ for a submodule $X$ of $M$. Then since $M$ is projective,  there exists $f\in \End (M)$ such that $\Image f \subseteq X$ and $\Image (1-f) \subseteq \Rad M = J M$ where $J$ is a Jacobson radical of $R$. Therefore $f$ is a monomorphism by \cite[Theorem 3]{Beck:Projective-and-free-modules}. Since $M$ is Rad-supplementing and $\Image f \cong M$,  $\Image f$ has a Rad-supplement $V$ in $M$, that is, $\Image f + V = M$ and $\Image f \cap V \subseteq \Rad V$. Now we have an epimorphism $g: V \to M /\Image f$ such that $\Ker g = V \cap \Image f \subseteq \Rad V$. Moreover, since $M = \Image f + \Image (1-f) = \Image f + \Rad M$ we have $\Rad (M/\Image f) = M/ \Image f$. Thus $\Rad V = V$, and so $V=0$ since $M$ is reduced. Hence $M=\Image f \subseteq X$ implies that $X=M$ as required.
\end{proof}

Recall that $R$ is said to be a \emph{semilocal} ring if $R/J$ is a semisimple ring, that is a left (and right) semisimple $R$-module (see \cite[\S 20]{Lam:AFirstCourseInNoncommutativeRings}).

\begin{theorem}
A ring $R$ is left perfect if and only if $R$ is semilocal, $_R R$ is reduced and the free left $R$-module $F=(_R R)^{(\mathbb{N})}$ is Rad-supplementing.
\end{theorem}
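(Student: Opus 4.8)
The plan is to prove the nontrivial implication, that semilocal $+$ reduced $+$ $F = (_RR)^{(\mathbb{N})}$ Rad-supplementing forces $R$ left perfect; the converse is routine since over a left perfect ring every module is supplemented, hence Rad-supplemented, and one checks directly that $F$ is Rad-supplementing (or appeals to the fact that left perfect rings are semilocal and reduced with $J$ left $T$-nilpotent). For the hard direction, I would first observe that $F$ is projective, and it is reduced because $_RR$ is reduced and a direct sum of reduced modules is reduced (no nonzero radical submodule can survive a projection onto some coordinate). Thus by Proposition~\ref{prop:If-M-reduced-projective-Rad-supplementing-then-It-has-small-radical} we get $\Rad F \ll F$, i.e. $JF \ll F$ since $\Rad F = JF$ for a projective (indeed free) module.

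The key step is then to translate $JF \ll F$ together with semilocality into left $T$-nilpotence of $J$, and conclude via the classical Bass characterization of left perfect rings (semilocal $+$ $J$ left $T$-nilpotent $\iff$ left perfect; equivalently, $R$ is left perfect iff $R/J$ is semisimple and $J$ is left $T$-nilpotent). To get $T$-nilpotence: suppose $a_1, a_2, \ldots$ is a sequence in $J$ with $a_1\cdots a_n \neq 0$ for all $n$. Using the standard construction (as in the proof that rings whose modules are weakly supplemented are perfect, cf. the Engin--Lomp reference \cite{Engin-Lomp:Rings-Whose-modules-are-Weakly-supplemented-are-perfect} already cited in Example~\ref{exm}), build a submodule of $F = \bigoplus_{i\in\mathbb{N}} Re_i$ generated by the elements $e_i - a_i e_{i+1}$; one shows that the submodule $X = \sum_i R(e_i - a_ie_{i+1})$ satisfies $X + JF = F$ but $X \neq F$, contradicting $JF \ll F$. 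Verifying $X + JF = F$ is the computational core: modulo $JF$ one has $e_i \equiv a_i e_{i+1} \equiv \cdots$, and since each $a_i \in J$ this pushes every $e_i$ arbitrarily far out, landing it in $JF$; verifying $X\neq F$ uses that $a_1\cdots a_n \neq 0$ to produce a coordinate functional not killed by $X$.

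The main obstacle will be making that last submodule argument precise: one must be careful that $F$ is a direct \emph{sum}, so every element has only finitely many nonzero coordinates, and show that if $e_1 \in X$ then some finite relation among the $e_i - a_ie_{i+1}$ forces $a_1\cdots a_n = 0$ for suitable $n$ — this is exactly where the hypothesis on the sequence is contradicted, and it is the crux of Bass's ``$P$-nilpotent'' argument. An alternative, cleaner route avoiding the explicit submodule: note that $JF \ll F$ with $F = (_RR)^{(\mathbb{N})}$ says precisely that $J$ is ``left $T$-nilpotent'' by a known equivalence (e.g. \cite[28.3]{Wisbauer:FoundationsofModuleandRingTheory}: $J$ is left $T$-nilpotent iff $\Rad N \ll N$ for every module $N$, or the weaker statement that $J M \ll M$ for $M = R^{(\mathbb{N})}$ already suffices), so the proof reduces to: reduced $+$ projective $+$ Rad-supplementing $\Rightarrow$ $\Rad \ll$ (Proposition~\ref{prop:If-M-reduced-projective-Rad-supplementing-then-It-has-small-radical}), applied to $F$, combined with $R$ semilocal, yields $R/J$ semisimple and $J$ left $T$-nilpotent, i.e. $R$ left perfect by Bass's theorem. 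I would present this streamlined version, citing the $T$-nilpotence equivalence rather than rebuilding it.
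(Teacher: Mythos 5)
Your proposal is correct and follows essentially the same route as the paper: show $F$ is reduced (the paper computes $P(F)=(P(_RR))^{(\mathbb{N})}=0$, which is your ``direct sum of reduced modules is reduced'' observation), apply Proposition~\ref{prop:If-M-reduced-projective-Rad-supplementing-then-It-has-small-radical} to get $\Rad F=JF\ll F$, deduce left $T$-nilpotence of $J$ from the standard equivalence (the paper cites \cite[28.3]{Anderson-Fuller:RingsandCategoriesofModules}), and conclude by Bass's theorem using semilocality. The explicit Bass-style submodule construction you sketch is unnecessary, as you yourself note, and your streamlined version is exactly the paper's argument.
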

\begin{proof}
If $R$ is left perfect, then $R$ is semilocal by \cite[28.4]{Anderson-Fuller:RingsandCategoriesofModules}, and clearly $_R R$ is reduced.  Since all left $R$-modules are supplemented and so Rad-supplemented, $F$ is  Rad-supplementing. Conversely, since $P(_R R)= 0$ we have  $P(F)=(P(_R R))^{(\mathbb{N})} = 0$, that is, $F$ is reduced. Thus by Proposition \ref{prop:If-M-reduced-projective-Rad-supplementing-then-It-has-small-radical}, $J F = \Rad F \ll F$, that is, $J$ is left $T$-nilpotent by, for example, \cite[28.3]{Anderson-Fuller:RingsandCategoriesofModules}. Hence $R$ is left perfect by \cite[28.4]{Anderson-Fuller:RingsandCategoriesofModules} since it is moreover semilocal.
\end{proof}

\cite{Averdunk:Ph.D.thesis} has studied supplementing modules over commutative noetherian rings, and he showed that If a module $M$ is supplementing, then it is \emph{cotorsion}, that is, $\Ext^1_R (F, M) = 0$ for every flat module $F$ (see \cite{Enochs-Jenda:RelativeHomologicalAlgebra} for cotorsion modules). So the question was raised When Rad-supplementing modules are cotorsion? Since any pure-injective module is cotorsion, the following result gives an answer of the question for a semisimple module over a commutative ring. The relation between (Rad-)supplementing modules and cotorsion modules needs to be further investigated.

The part (iii)$\Rightarrow$(i) of the proof of the following theorem is the same as the proof given  in \cite[Theorem 1.6-(ii)$\Rightarrow$(i)]{Zoschinger:ModulnDieInJederErweiterungEinKomplementHaben}, but we give it by explanation for completeness.
\begin{theorem}\label{thm:supplementing=rad-supplementing=pure-injective}
Let $R$ be a commutative ring. Then the following are equivalent for a semisimple $R$-module $M$.
\begin{enumerate}
\item[(i)] $M$ is supplementing,
  \item[(ii)] $M$ is Rad-supplementing,
  \item[(iii)] $M$ is pure-injective,
\end{enumerate}
\end{theorem}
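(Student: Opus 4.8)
The plan is to prove the equivalences by establishing the cycle $(i)\Rightarrow(ii)\Rightarrow(iii)\Rightarrow(i)$. The first implication $(i)\Rightarrow(ii)$ is immediate: every supplement is a Rad-supplement, as already observed in Proposition \ref{prop:radical-modules-are-rad-supp.} and the introduction, so a supplementing module is automatically Rad-supplementing. No work is needed there.

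For $(ii)\Rightarrow(iii)$, I would use the structure of semisimple modules over a commutative ring. Write $M = \bigoplus_{i\in I} (R/\mathfrak{m}_i)^{(\lambda_i)}$ as a direct sum of simple modules, grouped by the maximal ideals $\mathfrak{m}_i$ occurring. The key point is that a pure-injective semisimple module is exactly one for which each homogeneous component (the $\mathfrak{m}$-primary part) is pure-injective, and a semisimple $\mathfrak{m}$-primary module $(R/\mathfrak{m})^{(\lambda)}$ — equivalently a vector space over the field $R/\mathfrak{m}$ — is pure-injective if and only if either $\lambda$ is finite or the field $R/\mathfrak{m}$ is finite (more precisely, pure-injectivity of an infinite-dimensional vector space over a field $k$ forces $k$ to be finite). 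So the task is to show: if $M$ is Rad-supplementing and some homogeneous component is infinite-dimensional over an infinite residue field, we get a contradiction. I would embed that component, say $V = (R/\mathfrak{m})^{(\mathbb{N})}$ viewed as an $R/\mathfrak{m}$-vector space, into a suitable essential extension where it fails to have a Rad-supplement — here is where I would invoke the machinery: by Proposition \ref{prop:M has Rad-supp.in N iff it has in E(M) iff it has in E} it suffices to exhibit failure in the injective envelope, and since $\Rad$ of a module over the commutative ring behaves well, over the residue field any Rad-supplement is a genuine supplement, so one reduces to the classical fact (Z\"{o}schinger, \cite{Zoschinger:ModulnDieInJederErweiterungEinKomplementHaben}, Theorem 1.6) that an infinite-dimensional vector space over an infinite field is not supplementing. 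Combined with the fact that a direct summand of a Rad-supplementing module is Rad-supplementing, this forces every homogeneous component to be pure-injective, hence $M$ is pure-injective.

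For $(iii)\Rightarrow(i)$, as the authors note this follows the argument of \cite[Theorem 1.6]{Zoschinger:ModulnDieInJederErweiterungEinKomplementHaben}. I would argue as follows: let $M \subseteq N$ be any extension; since $M$ is pure-injective and semisimple, and semisimple submodules are closed submodules, $M$ is in particular a pure submodule of $N$ is not automatic, but one can pass to the essential closure — better, use that $M$ pure-injective implies $M$ is a direct summand of any module in which it sits as a pure submodule, and handle the non-pure part via the semisimplicity: decompose $M = M' \oplus M''$ where $M'$ is the "finite-type" part and use that a finitely generated semisimple module is supplementing (the Corollary after Theorem \ref{thm:every-module-with-composition-series-is-rad-supp.}), while the infinite part, being pure-injective over an infinite field, is forced to be a vector space over a finite residue field and one produces the supplement explicitly. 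I would then assemble the supplement in $N$ using the additivity of the supplementing property for $A \subseteq B$ with $A$ and $B/A$ supplementing (Remark after Proposition \ref{prop:if$A$and$B/A$are-Rad-supplementing-modulesThen-so-is-$B$}).

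The main obstacle I expect is the reduction step in $(ii)\Rightarrow(iii)$: cleanly isolating why Rad-supplementing (as opposed to supplementing) of an infinite-dimensional vector space over an infinite field already fails. The subtlety is that $\Rad V = 0$ for a vector space $V$ over a field, so a Rad-supplement of $U$ in any extension $N$ with $\Rad$ vanishing on the relevant submodules is forced to be an honest supplement (indeed a direct-sum complement), which is precisely the obstruction Z\"{o}schinger exploits; but one must check that passing from $M$ to a single homogeneous component, and then to its injective envelope, does not destroy this — this is where Proposition \ref{prop:M has Rad-supp.in N iff it has in E(M) iff it has in E} and the direct-summand stability are doing the real work, and verifying the hypotheses of those results in the vector-space situation is the delicate part.
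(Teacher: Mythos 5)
Your step (i)$\Rightarrow$(ii) is fine, but both remaining implications rest on a classification of pure-injective semisimple modules that is false, and this sinks the argument. A homogeneous component $(R/\mathfrak{m})^{(\lambda)}$ is annihilated by $\mathfrak{m}$, hence is a vector space over the field $R/\mathfrak{m}$; pure-injectivity (equivalently, algebraic compactness) of a module depends only on its structure over $R$ modulo its annihilator, and every vector space over a field is injective, hence pure-injective. So $(R/\mathfrak{m})^{(\lambda)}$ is pure-injective over $R$ for \emph{every} cardinal $\lambda$ and every residue field --- there is no dichotomy ``$\lambda$ finite or $R/\mathfrak{m}$ finite'', and the ``classical fact'' you invoke (that an infinite-dimensional vector space over an infinite field is not supplementing/pure-injective) is simply wrong: over $R=k$ an infinite field, $k^{(\mathbb{N})}$ is injective, hence trivially supplementing. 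The genuine obstruction to pure-injectivity of a semisimple module is the presence of infinitely many \emph{distinct} homogeneous components (e.g.\ $\bigoplus_{p}\mathbb{Z}/p\mathbb{Z}$ is not algebraically compact over $\mathbb{Z}$, while $(\mathbb{Z}/p\mathbb{Z})^{(\mathbb{N})}$ is), which your argument never touches. Consequently your (ii)$\Rightarrow$(iii) tries to derive a contradiction from a hypothesis that is not contradictory, and your (iii)$\Rightarrow$(i) sketch leans on the same false dichotomy; neither can be repaired along the lines you propose.

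The actual proofs are short and direct and do not use any structure theory. For (ii)$\Rightarrow$(iii): let $M\subseteq N$ be a \emph{pure} extension and $V$ a Rad-supplement of $M$ in $N$; then $M\cap V\subseteq \Rad V\subseteq \Rad N$, and purity over a commutative ring gives $M\cap\Rad N=\Rad M=0$ since $M$ is semisimple, so $N=M\oplus V$ and $M$ is a direct summand of every pure extension, i.e.\ pure-injective. For (iii)$\Rightarrow$(i) (Z\"{o}schinger's argument): given any extension $M\subseteq N$, the image $X=(M+\Rad N)/\Rad N$ is a semisimple pure-injective submodule of $N/\Rad N$, which has zero radical, so $X$ is pure there and hence a direct summand; lifting a complement to a submodule $V\supseteq\Rad N$ gives $V+M=N$ with $V$ minimal such, so $V$ is a supplement. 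Your appeal to Proposition \ref{prop:M has Rad-supp.in N iff it has in E(M) iff it has in E} and to direct-summand stability is not where the work lies; the key mechanisms are the two displayed purity facts above.
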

\begin{proof}
 (i)$\Rightarrow$(ii) is clear. (ii)$\Rightarrow$(iii) Let $M \subseteq N$ be a pure extension of $M$. By hypothesis $M$ has a Rad-supplement $V$ in $N$, that is, $M + V = N$ and $M\cap V \subseteq \Rad V$. Since $M$ is pure in $N$, we have $\Rad M = M\cap \Rad N$ (as $R$ is commutative). Thus $M\cap V \subseteq M \cap \Rad N = \Rad M = 0$ as $M$ is semisimple. Hence $N=M\oplus V$ as required. (iii)$\Rightarrow$(i) Let $M\subseteq N$ be any extension of $M$. Then the factor module $X=(M+\Rad N)/\Rad N$ of $M$ is again semisimple and pure-injective. Since semisimple submodules are pure in every module with zero radical and $\Rad (N/\Rad N) = 0$, it follows that $X$ is a direct summand of $N/\Rad N$. Now let $$(V/\Rad N) \oplus X = N/ \Rad N$$ for a submodule $V \subseteq N$ such that $\Rad N \subseteq V$. So we have $V+M = N$ with $V$ minimal, and thus $V$ is a supplement of $M$ in $N$. This is because, if $T+M =N$ for a submodule $T$ of $N$ with $T\subseteq V$, then from $$\Rad (N/T) = \Rad ((M+T)/T) = \Rad (M/M\cap T) = 0 $$ as $M/M\cap T$ is semisimple,
 we obtain that $\Rad N \subseteq T$. Moreover, since $$\Rad N = V\cap (M+\Rad N) = V\cap M + \Rad N ,$$
 we have $V\cap M \subseteq \Rad N$ and
 $V = T + V\cap M \subseteq T + \Rad N = T$, thus $T=V$. 
\end{proof}

\section{Ample Rad-supplementing modules}\label{sec:Ample-rad-supp}

The following useful result gives a relation between Rad-supplementing modules and ample Rad-supplementing modules.
\begin{proposition}\label{prop:M-ample-rad-supplementing-iff-every-submodule-rad-supplementing}
A module $M$ is ample Rad-supplementing if and only if every
submodule of $M$ is Rad-supplementing.
\end{proposition}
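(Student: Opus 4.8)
The plan is to prove both implications directly from the definitions, using the elementary observation that if $U+L=M$ then $L$ itself is an extension of... no — the correct bridge is: a Rad-supplement $L'$ of $U$ contained in $L$ is exactly the data of a Rad-supplement of the submodule $U\cap L$ inside $L$, which lets me transfer between "ample Rad-supplements in $M$" and "Rad-supplementing as an abstract module."

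For the direction ($\Leftarrow$): assume every submodule of $M$ is Rad-supplementing. Let $U\subseteq M$ and let $L\subseteq M$ with $U+L=M$. I want a Rad-supplement $L'$ of $U$ with $L'\subseteq L$. Consider the submodule $L$ of $M$; by hypothesis $L$ is Rad-supplementing. The inclusion $L\subseteq M$ exhibits $M$ as an extension of $L$, so $L$ has a Rad-supplement $L'$ in $M$: thus $L+L'=M$ and $L\cap L'\subseteq\Rad L'$. That $L'$ sits inside $L$? No — this does not force $L'\subseteq L$. So instead I should apply the Rad-supplementing property of the right module. The clean move: set $X = U\cap L$, which is a submodule of $L$; since $L$ is Rad-supplementing (being a submodule of $M$), and $L$ is itself an extension of $X$... again this gives a Rad-supplement inside an abstract extension, not inside $L$. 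The actual trick is the standard one for ample supplements: a Rad-supplement of $U$ inside $L$ is the same as a Rad-supplement of $U\cap L$ inside $L$ (because $U+L=M$ forces $U+L'=M$ as soon as $(U\cap L)+L'=L$, and $U\cap L'=(U\cap L)\cap L'$ when $L'\subseteq L$). So it suffices that $L$, viewed as a module, has the property that its submodule $U\cap L$ has a Rad-supplement in $L$ — i.e. that $L$ is Rad-\emph{supplemented}. Since every submodule of $M$ is Rad-supplementing, in particular every submodule of $L$ is Rad-supplementing hence has a supplement in $L$... that is exactly "$L$ is Rad-supplemented": take any $K\subseteq L$; then $K$ is Rad-supplementing, and $L\supseteq K$ is an extension, so $K$ has a Rad-supplement in $L$. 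Hence $L$ is Rad-supplemented, apply this with $K=U\cap L$, and lift the resulting Rad-supplement to the desired $L'$.

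For the direction ($\Rightarrow$): assume $M$ is ample Rad-supplementing. Let $N\subseteq M$ be a submodule; I must show $N$ is Rad-supplementing, i.e. $N$ has a Rad-supplement in every module containing it. Let $N\subseteq T$ be an arbitrary extension. Here I would form the pushout of $N\hookrightarrow M$ and $N\hookrightarrow T$ to get a module $P$ containing (copies of) both $M$ and $T$ with $M+T=P$ inside $P$ and $M\cap T=N$. Since $M$ is ample Rad-supplementing and $M+T=P$... but wait, I need $M$ ample Rad-supplementing \emph{in $P$}, which is the hypothesis applied to the extension $M\subseteq P$. So there is a Rad-supplement $T'$ of $M$ in $P$ with $T'\subseteq T$: that is $M+T'=P$ and $M\cap T'\subseteq\Rad T'$. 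Then $M\cap T'=M\cap T\cap T'=N\cap T'$, so $N\cap T'\subseteq\Rad T'$; and projecting $P\to T$ (or rather, $P=M+T$ with $M\cap T=N$, so $P/M\cong T/N$, and $M+T'=P$ gives $T=T\cap P=T\cap(M+T')=N+T'$ since $T'\subseteq T$). Hence $N+T'=T$ and $N\cap T'\subseteq\Rad T'$, so $T'$ is a Rad-supplement of $N$ in $T$.

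**Main obstacle.** The genuinely delicate point is the pushout construction in ($\Rightarrow$): I must check that in the pushout $P$ of the two monomorphisms, the canonical maps $M\to P$ and $T\to P$ remain injective and that $M\cap T=N$ holds inside $P$ (this is the standard fact that pushouts of monos along monos are "intersection-free amalgams" — true in $\RMOD$ because it is abelian). Once that is in hand everything else is a routine diagram chase. In ($\Leftarrow$) the only thing to be careful about is the passage "every submodule of $M$ is Rad-supplementing $\Rightarrow$ every submodule $L$ is Rad-supplemented," which is immediate since submodules of $L$ are submodules of $M$, together with the elementary verification that a Rad-supplement of $U\cap L$ in $L$ is automatically a Rad-supplement of $U$ in $M$ lying inside $L$.
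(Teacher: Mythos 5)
Your direction ($\Rightarrow$) is correct and is essentially the paper's argument: push out $N\hookrightarrow M$ along $N\hookrightarrow T$, use that a pushout of monomorphisms in $\RMOD$ is an amalgam in which (the images of) $M$ and $T$ satisfy $M+T=P$ and $M\cap T=N$, apply the ample Rad-supplementing hypothesis to the extension $M\subseteq P$ with $L=T$, and transfer the resulting Rad-supplement $T'\subseteq T$ down to $T$ by modularity. Those are exactly the points the paper checks.

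The gap is in ($\Leftarrow$): as written you prove the wrong statement. By the paper's definition, ``$M$ is ample Rad-supplementing'' means that for \emph{every extension} $N\supseteq M$ and every $L\subseteq N$ with $M+L=N$ there is a Rad-supplement $L'$ of $M$ in $N$ with $L'\subseteq L$; the module playing the role of ``$U$'' is $M$ itself and the ambient module is an arbitrary overmodule $N$. You instead fix $U\subseteq M$ and $L\subseteq M$ with $U+L=M$ and produce an ample Rad-supplement of $U$ \emph{inside $M$}. That is the property ``every submodule of $M$ has ample Rad-supplements in $M$'' (being amply Rad-supplemented), which is a different assertion and is not the converse of the implication you proved in the other direction. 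Your ``bridge'' observation is nevertheless the right tool once the quantifiers are fixed: for $M\subseteq N$ and $L\subseteq N$ with $M+L=N$, a submodule $L'\subseteq L$ is a Rad-supplement of $M$ in $N$ precisely when it is a Rad-supplement of $M\cap L$ in $L$, since $M+L'=M+(M\cap L)+L'=M+L=N$ and $M\cap L'=(M\cap L)\cap L'$. Now $M\cap L$ \emph{is} a submodule of $M$, hence Rad-supplementing by hypothesis, hence has a Rad-supplement in its extension $L$ --- and that is the entire proof (it is the paper's). Note also that your detour through ``$L$ is Rad-supplemented'' would not survive the correction: when $L\subseteq N$, the submodules of $L$ need not be submodules of $M$, and all you need (and have) is that the single submodule $M\cap L$ is Rad-supplementing.
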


\begin{proof}
$(\Leftarrow)$ Let $M$ be a module and $N$ be any extension of $M$.
Suppose that for a submodule $X\subseteq N$, $X + M = N$. By
hypothesis the submodule $X\cap M$ of $M$ has a Rad-supplement $V$
in $X$ containing $X \cap M$, that is, $(X \cap M) + V = X$ and
$(X \cap M) \cap V \subseteq \Rad V$. Then $N = M + X = M+(X \cap
M) + V = M + V $ and, $M \cap V = M\cap (V \cap X) = (X \cap M) \cap V
\subseteq \Rad V$. Hence $V$ is a Rad-supplement of $M$ in $N$ such that $V\subseteq X$.

$(\Rightarrow)$ Let $U$ be a submodule of $M$ and $N$ be any module
containing $U$. Thus we can draw the pushout for the
inclusion homomorphisms $i_1 :U\hookrightarrow N$ and  $i_2 :U
\hookrightarrow M$:
$$ \xymatrix{
   M \ar@{-->}[r]^{\alpha} & F  \\
   U \,\ar@{^{(}->}[r]_{i_1} \ar@{^{(}->}[u]^{i_2} & N \ar@{-->}[u]_{\beta}
} .$$
In the diagram, $\alpha$ and $\beta$ are also monomorphisms by the
properties of pushout (see, for example, \cite[Exercise 5.10]{Rotman(2009):AnIntroductionToHomologicalAlgebra}). Let $M'= \Image \alpha$ and $N'
= \Image \beta$. Then $F = M' + N'$ by the properties of pushout. So by
hypothesis, $M' \cong M$ has a Rad-supplement $V$ in $F$
 such that $V\subseteq N'$, that is,  $M' + V = F$ and $M'\cap V
\subseteq \Rad V$. Therefore $V$ is a Rad-supplement of $M' \cap N'$
in $N'$, because $N' = N' \cap F = N' \cap (M' + V) = (M' \cap N')+V$
and $(M' \cap N')\cap V = M'\cap V \subseteq \Rad V$. Now, we claim that
$\beta^{-1}(V)$ is a Rad-supplement of $U$ in
$N$. Since $\beta : N \rightarrow F$ is a monomorphism with $N' =
\Image \beta$, we have an isomorphism $\widetilde{\beta} : N\rightarrow N'$ defined as
$\widetilde{\beta} (x)= \beta (x)$ for all $x\in N$. By this isomorphism, since $V$ is a Rad-supplement of
$M' \cap N'$ in $N'$, we obtain  $\widetilde{\beta}^{-1}(V)$ is
a Rad-supplement of $\widetilde{\beta}^{-1}(M'\cap N')$ in
$\widetilde{\beta}^{-1}(N')$. Since it can be easily shown that $\widetilde{\beta}^{-1}
(V)=\beta ^{-1}(V)$, $\widetilde{\beta}^{-1}(N') = N$, and $\widetilde{\beta}^{-1}(M'\cap N') = U$ the result follows.
\end{proof}

\begin{corollary}\label{cor:every-ample-rad-supplementing-module-is-rad-supplemented}
Every ample Rad-supplementing module is both Rad-supplementing and Rad-supplemented.
\end{corollary}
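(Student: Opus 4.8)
The plan is to deduce both assertions directly from Proposition \ref{prop:M-ample-rad-supplementing-iff-every-submodule-rad-supplementing}. Assume $M$ is ample Rad-supplementing. That proposition then tells us at once that every submodule of $M$ is Rad-supplementing, and the two claims of the corollary are just two instances of this fact.

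For the first claim, I would observe that $M$ is itself a submodule of $M$, so by the proposition $M$ is Rad-supplementing. (If one prefers an argument that does not invoke the proposition, note that for any extension $N \supseteq M$ we have $M + N = N$, so applying the defining property of ample Rad-supplements to $L = N$ produces a Rad-supplement of $M$ in $N$; hence $M$ is Rad-supplementing.) For the second claim, I would take an arbitrary submodule $U \subseteq M$. By the proposition $U$ is Rad-supplementing, and since $M$ is an extension of $U$, the module $U$ has a Rad-supplement in $M$. As $U \subseteq M$ was arbitrary, every submodule of $M$ has a Rad-supplement in $M$, which is exactly the statement that $M$ is Rad-supplemented.

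There is essentially no obstacle here; all the content has already been packaged into Proposition \ref{prop:M-ample-rad-supplementing-iff-every-submodule-rad-supplementing}. The only point worth making explicit is the trivial remark that a Rad-supplementing module automatically possesses a Rad-supplement inside \emph{any} module containing it as a submodule, in particular inside $M$ itself, which is what converts ``every submodule is Rad-supplementing'' into ``$M$ is Rad-supplemented''.
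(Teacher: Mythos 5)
Your argument is correct and is exactly the route the paper intends: the corollary is stated without proof precisely because both claims follow immediately from Proposition \ref{prop:M-ample-rad-supplementing-iff-every-submodule-rad-supplementing} in the way you describe ($M$ is a submodule of itself, and each submodule $U\subseteq M$, being Rad-supplementing, has a Rad-supplement in the particular extension $M$). Your parenthetical direct argument taking $L=N$ in the definition of ample Rad-supplements is also a valid alternative for the first claim.
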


\begin{theorem}\label{thm:R-perfect-iff-R-reduced-and-F-ample-rad-supp}
A ring $R$ is left perfect if and only if $_R R$ is reduced and the free left $R$-module $F = (_R R)^{(\mathbb{N})}$ is ample Rad-supplementing.
\end{theorem}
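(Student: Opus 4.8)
The plan is to prove the two implications separately, using the earlier results as black boxes.

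\medskip

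\textbf{Proof plan for Theorem \ref{thm:R-perfect-iff-R-reduced-and-F-ample-rad-supp}.}

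For the forward direction, suppose $R$ is left perfect. Then $_R R$ is reduced since a left perfect ring has $P(_R R) = 0$ (indeed $J$ is left $T$-nilpotent, hence the intersection of the Jacobson radical powers is zero and there is no nonzero radical left ideal). Moreover, over a left perfect ring every module is supplemented, hence every module is Rad-supplemented. By Proposition \ref{prop:M-ample-rad-supplementing-iff-every-submodule-rad-supplementing}, to see that $F$ is ample Rad-supplementing it suffices to check that every submodule of $F$ is Rad-supplementing; but over a left perfect ring every module is supplemented and hence Rad-supplemented, and more to the point every module is supplementing (a perfect ring is a left max ring, so every module has small radical; alternatively one can argue directly that supplemented modules over a perfect ring are supplementing, or simply invoke that every module over a left perfect ring is supplementing as in the preceding theorem's proof). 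In any case every submodule of $F$ is Rad-supplementing, so $F$ is ample Rad-supplementing.

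For the converse, suppose $_R R$ is reduced and $F = (_R R)^{(\mathbb{N})}$ is ample Rad-supplementing. First, $P(F) = (P(_R R))^{(\mathbb{N})} = 0$, so $F$ is reduced; and $F$ is projective. By Corollary \ref{cor:every-ample-rad-supplementing-module-is-rad-supplemented}, $F$ is in particular Rad-supplementing, so by Proposition \ref{prop:If-M-reduced-projective-Rad-supplementing-then-It-has-small-radical} we get $\Rad F \ll F$, i.e. $JF \ll F$, which forces $J$ to be left $T$-nilpotent (by \cite[28.3]{Anderson-Fuller:RingsandCategoriesofModules}). To conclude that $R$ is left perfect via \cite[28.4]{Anderson-Fuller:RingsandCategoriesofModules}, it remains to show $R$ is semilocal, i.e. $R/J$ is semisimple. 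This is where the ``ample'' and ``Rad-supplemented'' hypotheses do real work: since $F$ is ample Rad-supplementing, $F$ is Rad-supplemented (Corollary \ref{cor:every-ample-rad-supplementing-module-is-rad-supplemented}), and a countably infinite free module being Rad-supplemented should, together with the characterization of Rad-supplemented modules from \cite{Rad-supplementedModules}, force $R/J$ to be semisimple — the point being that $R/P(R) = R/0 = R$ and one can extract from the Rad-supplemented free module that $R$ modulo its radical is semisimple. So I would invoke: $F$ Rad-supplemented $\Rightarrow$ $R$ is semilocal (this is the ring-theoretic content one expects to be available from the cited paper, analogous to the fact that $(_R R)^{(\mathbb{N})}$ weakly supplemented implies $R$ semilocal).

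The main obstacle is precisely establishing semilocality of $R$ from the hypothesis that $F$ is ample Rad-supplementing (equivalently Rad-supplemented). One clean route: every cyclic submodule of $F$, indeed every submodule, is Rad-supplementing by Proposition \ref{prop:M-ample-rad-supplementing-iff-every-submodule-rad-supplementing}, so in particular $_R R$ is Rad-supplementing and Rad-supplemented; but a more direct approach is to use that $F$ being Rad-supplemented and projective reduced, combined with $\Rad F \ll F$ already obtained, gives that $F/\Rad F$ is a semisimple module (every Rad-supplemented module with small radical is supplemented, hence $F/JF$ is a semisimple — in fact supplemented with zero radical — $R$-module), and $F/JF \cong (R/J)^{(\mathbb{N})}$ semisimple as a left $R$-module forces $R/J$ to be a semisimple ring. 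Once semilocality is in hand, \cite[28.4]{Anderson-Fuller:RingsandCategoriesofModules} closes the argument. I would organize the write-up so that the reduced + projective + Rad-supplementing input yields $\Rad F \ll F$ first, and then derive semilocality from smallness of the radical together with the Rad-supplemented structure, avoiding any appeal to a delicate classification.
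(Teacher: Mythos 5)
Your proof is correct, and while the forward direction coincides with the paper's (left perfect $\Rightarrow$ every module is Rad-supplemented $\Rightarrow$ every submodule of $F$ is Rad-supplementing $\Rightarrow$ $F$ is ample Rad-supplementing via Proposition \ref{prop:M-ample-rad-supplementing-iff-every-submodule-rad-supplementing}), your converse takes a genuinely different route. The paper disposes of the converse in one line: $F$ ample Rad-supplementing implies $F$ Rad-supplemented by Corollary \ref{cor:every-ample-rad-supplementing-module-is-rad-supplemented}, and then it invokes \cite[Theorem 5.3]{Rad-supplementedModules} as a black box to conclude $R$ is left perfect. You instead reconstruct the argument internally, mirroring the proof of the paper's preceding theorem: reducedness of $_RR$ gives $P(F)=0$, so $F$ is reduced, projective and Rad-supplementing, whence $\Rad F\ll F$ by Proposition \ref{prop:If-M-reduced-projective-Rad-supplementing-then-It-has-small-radical} and $J$ is left $T$-nilpotent; then you extract semilocality from the Rad-supplemented structure of $F$. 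That last step is the only place where your write-up wobbles: the parenthetical ``every Rad-supplemented module with small radical is supplemented'' is an unnecessary detour (and would itself need proof). The clean justification is the one you gesture at: $F/\Rad F$ is a factor of a Rad-supplemented module, hence Rad-supplemented, and $\Rad(F/\Rad F)=0$, so every Rad-supplement in it is a direct summand and $F/\Rad F\cong (R/J)^{(\mathbb{N})}$ is semisimple, forcing $R/J$ to be a semisimple ring. With semilocality and $T$-nilpotency in hand, \cite[28.4]{Anderson-Fuller:RingsandCategoriesofModules} finishes. Your version buys self-containedness (only standard facts about Rad-supplemented modules are used, rather than the full strength of the external Theorem 5.3) at the cost of a longer argument; the paper's version is shorter but outsources the real content of the converse.
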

\begin{proof}
If $R$ is left perfect, then $_R R$ is reduced
  and all left $R$-modules are supplemented, and so Rad-supplemented. Thus every submodule of $F$ is Rad-supplementing. Hence $F$ is ample Rad-supplementing by Proposition \ref{prop:M-ample-rad-supplementing-iff-every-submodule-rad-supplementing}. Conversely, if $F$ is ample Rad-supplementing, then it is Rad-supplemented by Corollary \ref{cor:every-ample-rad-supplementing-module-is-rad-supplemented}, and so $R$ is left perfect by  \cite[Theorem 5.3]{Rad-supplementedModules}.
\end{proof}

Finally, we give the characterization of the rings over which every module is (ample) Rad-supplementing.
\begin{theorem}\label{thm:characterizaiton-of-rad-supp}
For a ring $R$, the following are equivalent:
\begin{enumerate}
  \item Every left $R$-module is Rad-supplementing;
  \item Every reduced left $R$-module is Rad-supplementing;
  \item Every left $R$-module is ample Rad-supplementing;
  \item Every left $R$-module is Rad-supplemented;
  \item $R/P(R)$ is left perfect.
\end{enumerate}
\end{theorem}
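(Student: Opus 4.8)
The plan is to prove the cycle $(1)\Rightarrow(2)\Rightarrow(3)\Rightarrow(4)\Rightarrow(5)\Rightarrow(1)$; most of the links are formal, the substance being imported through $(4)\Leftrightarrow(5)$, which is exactly the characterization of the rings over which every left module is Rad-supplemented proved in \cite{Rad-supplementedModules} (there $P(R)$ is a two-sided ideal, since the sum of all radical left ideals of $R$ is stable under right multiplication, so $R/P(R)$ is a ring and $(5)$ makes sense). Implication $(1)\Rightarrow(2)$ is trivial, reduced modules being left $R$-modules. For $(5)\Rightarrow(1)$ I would first use \cite{Rad-supplementedModules} to get that every left $R$-module is Rad-supplemented, and then observe that for an arbitrary extension $M\subseteq N$ the module $N$ is Rad-supplemented, so $M$ has a Rad-supplement in $N$, i.e.\ $M$ is Rad-supplementing. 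This is really the trivial observation that conditions $(1)$ and $(4)$ unwind to the same assertion: every inclusion $U\subseteq N$ of left $R$-modules admits a Rad-supplement of $U$ in $N$.

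For $(2)\Rightarrow(3)$ I would argue via submodules: by Proposition \ref{prop:M-ample-rad-supplementing-iff-every-submodule-rad-supplementing} it suffices to show that every submodule $U$ of a given left $R$-module is Rad-supplementing, and since $U/P(U)$ is reduced it is Rad-supplementing by $(2)$, whence $U$ is Rad-supplementing by Corollary \ref{cor:M/P(M)-rad-suppl-THEN-M-rad-suppl} (equivalently, since $P(U)$ is radical hence Rad-supplementing by Proposition \ref{prop:radical-modules-are-rad-supp.}, one applies Proposition \ref{prop:if$A$and$B/A$are-Rad-supplementing-modulesThen-so-is-$B$} to the sequence $0\to P(U)\to U\to U/P(U)\to 0$). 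Finally $(3)\Rightarrow(4)$ is immediate from Corollary \ref{cor:every-ample-rad-supplementing-module-is-rad-supplemented}, since an ample Rad-supplementing module is Rad-supplemented.

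The only step that is not pure bookkeeping is the fact, used twice above, that $M/P(M)$ is always reduced; I would spell it out, since it is precisely what makes condition $(2)$ equivalent to the others rather than a priori weaker. One checks first that $P(M)$ is itself radical --- if $\Rad U=U$ and $\Rad V=V$ then $U,V\subseteq\Rad(U+V)$ because $\Rad$ is a preradical, so $\Rad(U+V)=U+V$, and this passes to arbitrary (directed) sums --- hence $P(M)$ is the largest radical submodule of $M$. Since a radical module has no maximal submodule, every maximal submodule of any $X$ with $P(M)\subseteq X\subseteq M$ must contain $P(M)$ (otherwise its sum with $P(M)$ would be $X$, yielding a simple quotient of $P(M)$), so $\Rad(X/P(M))=(\Rad X)/P(M)$; therefore a radical submodule of $M/P(M)$ is of the form $X/P(M)$ with $\Rad X=X$, i.e.\ with $X\subseteq P(M)$, so it is zero. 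Beyond this point the proof is routine assembly of results already established, so I do not anticipate a genuine obstacle; the real content is imported through the equivalence $(4)\Leftrightarrow(5)$ from \cite{Rad-supplementedModules}.
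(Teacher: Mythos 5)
Your proof is correct and follows essentially the same route as the paper: the same auxiliary results (the corollary that $M/P(M)$ Rad-supplementing implies $M$ Rad-supplementing, Proposition \ref{prop:M-ample-rad-supplementing-iff-every-submodule-rad-supplementing}, Corollary \ref{cor:every-ample-rad-supplementing-module-is-rad-supplemented}) together with the imported equivalence $(4)\Leftrightarrow(5)$ from \cite{Rad-supplementedModules}, merely rearranged into a single cycle instead of the paper's pattern of sub-equivalences. Your explicit verification that $P(M)$ is radical and $M/P(M)$ is reduced, which the paper uses without proof, is correct and a worthwhile addition.
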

\begin{proof}Let $M$ be a module.
(1)$\Rightarrow$(2) is clear. (2)$\Rightarrow$(1) Since $M/P(M)$ is reduced, it is Rad-supplementing by hypothesis. So $M$ is Rad-supplementing by Corollary \ref{cor:M/P(M)-rad-suppl-THEN-M-rad-suppl}. (1)$\Rightarrow$(3)  Since every submodule of $M$ is Rad-supplementing, $M$ is ample Rad-supplementing by Proposition \ref{prop:M-ample-rad-supplementing-iff-every-submodule-rad-supplementing}. (3)$\Rightarrow$(4) by Corollary \ref{cor:every-ample-rad-supplementing-module-is-rad-supplemented}. (4)$\Rightarrow$(1) Let $M\subseteq N$ be any extension of  $M$. By hypothesis, $N$ is Rad-supplemented, and so $M$ has a Rad-supplement in $N$. (4)$\Leftrightarrow$(5) by \cite[Theorem 6.1]{Rad-supplementedModules}.
\end{proof}

--------------------------------------------------

\end{document}